\definecolor{darkblue}{rgb}{0,0,0.6}
\newcommand{\ignore}[1]{}
\renewcommand{\epsilon}{\varepsilon}
\renewcommand{\phi}{\varphi}
\newcommand{\Z}{\mathbbm{Z}}
\newcommand{\mcS}{\mathcal{S}}
\newcommand{\CP}{\mathbbm{CP}}
\newcommand{\RP}{\mathbbm{RP}}
\newcommand{\bbN}{\mathbbm{N}}
\newcommand{\ra}{\longrightarrow}
\newcommand{\wt}{\widetilde}
\newcommand{\Tor}{\mathrm{Tor}}
\DeclareMathOperator{\Sym}{Sym}
\DeclareMathOperator{\Out}{Out}
\DeclareMathOperator{\Hom}{Hom}
\DeclareMathOperator{\Her}{Herm}
\DeclareMathOperator{\id}{Id}
\DeclareMathOperator{\trace}{trace}
\DeclareMathOperator{\im}{im}
\DeclareMathOperator{\Id}{Id}
\DeclareMathOperator{\ev}{ev}
\DeclareMathOperator{\Tors}{Tors}
\DeclareMathOperator{\tr}{tr}
\DeclareMathOperator{\PD}{PD}
\numberwithin{equation}{section}
\newtheorem{thm}[equation]{Theorem}
\newtheorem{prop}[equation]{Proposition}
\newtheorem{cor}[equation]{Corollary}
\newtheorem{lemma}[equation]{Lemma}
\newtheorem*{thm*}{Theorem}
\theoremstyle{definition}
\newtheorem{defi}[equation]{Definition}
\newtheorem{rem}[equation]{Remark}
\newtheorem{question}[equation]{Question}
\crefname{thm}{Theorem}{Theorems}
\newcommand{\intf}{\lambda}
\newcommand{\DCM}[2]{\DeclareMathOperator{#1}{#2}}
\DCM{\Herm}{Herm}
\begin{document}

\title[$\CP^2$-stable classification of $4$-manifolds with finite $\pi_1$]{$\CP^2$-stable classification of $4$-manifolds with finite fundamental group}

\author[D.~Kasprowski]{Daniel Kasprowski}
\address{Rheinische Friedrich-Wilhelms-Universit\"at Bonn, Mathematisches Institut,\newline\indent Endenicher Allee 60, 53115 Bonn, Germany}
\email{kasprowski@uni-bonn.de}

\author[P.~Teichner]{Peter Teichner}
\address{Max Planck Institut f\"{u}r Mathematik, Vivatsgasse 7, 53111 Bonn, Germany}
\email{teichner@mpim-bonn.mpg.de}

\keywords{Whitehead's Gamma group; stable classification of $4$-manifolds}
\subjclass[2010]{57N13}

\date{\today}

\begin{abstract}
We show that two closed, connected $4$-manifolds with finite fundamental groups are $\CP^2$-stably homeomorphic if and only if their quadratic $2$-types are stably isomorphic and their Kirby-Siebenmann invariants agree.
\end{abstract}
\maketitle
\section{Introduction}
Two $4$-manifolds are $\CP^2$-stably homeomorphic if they become homeomorphic after taking connected sum with finitely many copies of $\CP^2$, where we allow different numbers of copies for
the two manifolds. If the manifolds are orientable, we allow connected sums with both orientations of $\CP^2$. We will show that the $\CP^2$-stable classification for closed connected 4-manifolds with finite fundamental group is determined by their (stable) quadratic $2$-type. 

The quadratic $2$-type $Q_M$ consists of the Postnikov $2$-type of the 4-manifold $M$, determined by $\pi_1M, \pi_2M$ (as a $\pi_1M$-module) and the $k$-invariant in $H^3(\pi_1M;\pi_2M)$, together with the orientation character $w_1M: \pi_1M\to \{\pm 1\}$ and the equivariant intersection form $\lambda_M: \pi_2M \times \pi_2M \ra \Z[\pi_1M]$:
\[
Q_M:=(\pi_1M,w_1M,\pi_2M,k_M,\lambda_M).
\]
We say that the quadratic $2$-types of two manifolds are stably isomorphic if they become isomorphic after finitely many
stabilizations as follows:
\[
Q_M \mapsto Q_{M\#\pm\CP^2} \cong (\pi_1M,w_1M,\pi_2M \oplus \Z[\pi_1M],(k_M,0), \lambda_M \perp \langle \pm 1 \rangle ) 
\]
For a manifold $M$ with an isomorphism $\pi_1M \cong \pi$, we pick a continuous map $c\colon M\to B\pi$ inducing the isomorphism on $\pi_1$. Then $c$ is uniquely determined up to based homotopy by $\pi_1(c)$.
\begin{thm}
	\label{thm:main}
Let $M_1, M_2$ be closed, connected $4$-manifolds with finite fundamental group $\pi$, orientation character $w:\pi\to\{\pm 1\}$ and equal Kirby-Siebenmann invariants. Then the following are equivalent:
	\begin{enumerate}
		\item\label{it:main1} $c_*[M_1]=c_*[M_2]\in H_4(\pi;\Z^w)/\pm\Out(\pi)$;
		\item\label{it:main2} $M_1$ and $M_2$ are $\CP^2$-stably homeomorphic;
		\item\label{it:main3} the quadratic 2-types $Q_{M_1}$ and $Q_{M_2}$ are stably isomorphic.
	\end{enumerate}
\end{thm}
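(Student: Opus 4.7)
The plan is to prove the cycle of implications $(2)\Rightarrow(3)\Rightarrow(1)\Rightarrow(2)$. The easy direction is $(2)\Rightarrow(3)$: a $\CP^2$-stable homeomorphism between $M_1$ and $M_2$ induces an isomorphism of the quadratic $2$-types of sufficiently stabilized versions of the manifolds, and the displayed formula for $Q_{M\#\pm\CP^2}$ shows that this realizes exactly the stated notion of stable isomorphism between $Q_{M_1}$ and $Q_{M_2}$.

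For $(3)\Rightarrow(1)$, I would construct a well-defined invariant $\Psi$ from stable isomorphism classes of quadratic $2$-types over $(\pi,w)$ into $H_4(\pi;\Z^w)/\pm\Out(\pi)$ and then verify $\Psi(Q_M)=[c_*[M]]$. Recall that the Postnikov $2$-type $P_2(M)$ is determined up to equivalence by $(\pi_1M,\pi_2M,k_M)$, and the classifying map factors as $c\colon M\to P_2(M)\to K(\pi,1)$. Via the Serre spectral sequence of $K(\pi_2M,2)\to P_2(M)\to K(\pi,1)$, one checks that $c_*[M]\in H_4(\pi;\Z^w)$ is controlled by the intersection form $\lambda_M$ together with $(\pi,w,\pi_2M,k_M)$, modulo the $\pm\Out(\pi)$-action accounting for the choice of isomorphism $\pi_1M\cong\pi$. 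Stabilization of $\lambda_M$ by $\langle\pm 1\rangle$ adds a hyperbolic class with zero image in $H_4(\pi;\Z^w)$, so $\Psi$ descends to stable isomorphism classes.

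The hardest direction is $(1)\Rightarrow(2)$, which I would approach via Kreck's modified surgery. The $\CP^2$-stable homeomorphism class of $(M,c)$ is controlled by its image in the topological bordism group $\Omega_4^{\mathrm{TOP}}(B\pi;w)$ modulo the subgroup generated by $[\CP^2]$ and $[\overline{\CP^2}]$. The Atiyah--Hirzebruch spectral sequence, combined with finiteness of $\pi$, identifies the kernel of the edge map $\Omega_4^{\mathrm{TOP}}(B\pi;w)\twoheadrightarrow H_4(\pi;\Z^w)$ with $\Omega_4^{\mathrm{TOP}}(\pt)=\Z\oplus\Z/2$, detected by signature and Kirby--Siebenmann invariant. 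Since $[\CP^2]$ and $[\overline{\CP^2}]$ have signature $\pm 1$, any signature difference is killed by $\CP^2$-stabilization, and the fixed KS assumption handles the $\Z/2$ summand. This reduces the problem to showing that two $4$-manifolds bordant over $B\pi$ become $\CP^2$-stably homeomorphic.

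The final step uses Kreck's surgery below the middle dimension on such a bordism, followed by $\pi_2$-surgery; the obstruction to completing the latter lives in a quotient involving Whitehead's $\Gamma$-group of the module $\pi_2$ over $\Z[\pi]$. The main technical obstacle --- and, given the paper's keywords, presumably its heart --- will be showing that this middle-dimensional obstruction can be killed after sufficiently many $\CP^2$-stabilizations for finite $\pi$. Concretely, one must show that the natural map from the relevant $\Gamma$-group to the $L$-theoretic obstruction group vanishes stably, which for finite fundamental groups should follow from an explicit computation of $\Gamma$ exploiting that $\Z[\pi]$ is (semi-)Noetherian of finite global dimension after inverting $|\pi|$, together with the hyperbolic stabilization of $\lambda_M$ provided by $\CP^2$-summands.
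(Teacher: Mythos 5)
Your handling of $(2)\Rightarrow(3)$ is fine (it holds essentially by definition), and the paper indeed treats $(1)\Leftrightarrow(2)$ as known, importing it from Kreck's modified surgery and \cite{KPT}; your sketch of that direction is in the right spirit, though your final paragraph about $\pi_2$-surgery obstructions mapping to $L$-theory via $\Gamma$ is misplaced --- Kreck's theorem already gives that bordism over the normal $1$-type classifies up to $\CP^2$-stable homeomorphism, with no residual middle-dimensional obstruction to kill, and the Kirby--Siebenmann hypothesis plus signature absorption in the AHSS is all one needs beyond that.

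The genuine gap is in $(3)\Rightarrow(1)$, which is the actual theorem of the paper. You write that ``via the Serre spectral sequence of $K(\pi_2 M,2)\to P_2(M)\to K(\pi,1)$, one checks that $c_*[M]\in H_4(\pi;\Z^w)$ is controlled by the intersection form $\lambda_M$ together with $(\pi,w,\pi_2 M, k_M)$,'' but this is precisely what needs to be proved, it is not a routine check, and in fact it is \emph{false} without the finiteness hypothesis on $\pi$: the paper recalls a pair of $4$-manifolds with $\pi_1=\Z/n\times\Z$, $\pi_2=0$, equal quadratic $2$-types, and distinct $c_*[M]$. So no spectral-sequence argument that ignores finiteness can close this step. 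What the paper actually does is work with the common Postnikov $2$-type $B$, use Whitehead's exact sequence to identify $H_4(\wt B)\cong\Gamma(\pi_2 B)$, and build the exact sequence
\[
0\to \Gamma(\pi_2 B)\otimes_{\Z\pi}\Z^w\xrightarrow{\,i\,}H_4(B;\Z^w)\xrightarrow{\,q\,}\Z/|\pi|\to 0.
\]
Theorem~\ref{thm:mainteichner} then shows that two $B$-polarized Poincar\'e complexes with the same equivariant intersection form differ in $H_4(B;\Z^w)$ by an element of $\Tors(\Gamma(\pi_2 B)\otimes_{\Z\pi}\Z^w)\subseteq\operatorname{im}(i)$, which dies under $B\to B\pi$ because it comes from $\wt B$. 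The crux is showing $\Tors(H_4^w(B))\subseteq\operatorname{im}(i)$, which needs a $\Z\pi$-linear retraction $\kappa\colon\Gamma(\pi_2 B)\to\Z^w$ hitting $\lambda$ primitively (Lemma~\ref{lem:mainteichner}); constructing $\kappa$ --- especially in the nonorientable case, where a central involution with $w(\tau)=1$ and a Lefschetz-trace/$\mathrm{Sq}^2$ argument are required, and the exceptional case $\pi=Q\rtimes\Z/2$ must be handled separately via a transfer to the oriented double cover --- is the real content, and your proposal supplies none of it.
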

\begin{rem}
	Since stably homeomorphic smooth $4$-manifolds are stably diffeomorphic, \cref{thm:main} remains true in the smooth category (where Kirby-Siebenmann vanishes). In the topological case, one could also allow taking connected sums with Freedman's manifold $*\CP^2$. Then the Kirby-Siebenmann invariant can be changed and \cref{thm:main} has a version where one does not have to control it. 
\end{rem}
Since $\CP^2\#\CP^2\#(-\CP^2)\cong \CP^2\#(S^2\times S^2)$, two manifolds are $\CP^2$-stably homemorphic if they are even {\em stably homeomorphic}, i.e.\ homeomorphic after taking connected sum with finitely many copies of $S^2\times S^2$.

If follows from Kreck's modified surgery \cite{kreck} that \eqref{it:main1} and \eqref{it:main2} are equivalent for arbitrary fundamental groups, see also \cite[Thm~1.1]{KPT}. Moreover, \eqref{it:main2} implies \eqref{it:main3} by definition and hence the interesting implication for us is \eqref{it:main3}$\Rightarrow$\eqref{it:main1}. 

In \cite[Thm~A]{KPT} we showed that already the quadruple $(\pi_1,\pi_2,k_,w_1)$ determines the $\CP^2$-stable classification, for groups that have one end or are torsion-free. 

We also described two 4-manifolds of the form lens space times circle which both have  fundamental group $\Z/n\times\Z$ and $\pi_2=0$ but are not $\CP^2$-stably homeomorphic \cite[Ex. 1.4]{KPT}. Hence for infinite groups (with two ends and torsion), the implication \eqref{it:main3}$\Rightarrow$\eqref{it:main1} is false in general.

The following question remains open, compare \cite[Question~1.5]{KPT}, even though we show there that the answer is ``no'' if $H_4(\pi; \Z^w)$ is annihilated by 4 or 6.

\begin{question}
Are there finite groups such that the equivariant intersection form $\lambda$ is needed in \cref{thm:main} .
\end{question}

Our proof is based on the following results on {\em finite Poincar\'e complexes}. Let $X$ be a finite connected Poincar\'e 4-complex with finite fundamental group $\pi$, orientation character $w$ and chosen fundamental class $[X]\in H_4(X;\Z^w)\cong \Z$. If $B$ is a $3$-coconnected CW-complex then $X$ is \emph{$B$-polarized} if it is equipped with a $3$-equivalence $f\colon X\to B$. The set of $4$-dimensional $B$-polarized Poincar\'e complexes with orientation character $w:\pi_1B\to \{\pm 1\}$, up to homotopy equivalence over $B$, is denoted by $\mcS_4^{\PD}(B,w)$. 
\begin{thm}\label{thm:hk}
Assume that $\mcS_4^{\PD}(B,w)$ is non-empty and that $\pi:=\pi_1B$ is finite. Then there is an exact sequence
\[
0\to \Tors(\Z^w\otimes_{\Z\pi}\Gamma(\pi_2(B)))\to \mcS_4^{\PD}(B,w)\to \Z/|\pi| \times \Her(\pi_2(B)).
\]
\end{thm}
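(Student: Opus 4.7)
The plan is to classify $B$-polarized Poincar\'e 4-complexes by their fundamental classes in $H_4(B;\Z^w)$, and then analyze this homology group via the fibration $K(\Pi,2)\to B\to B\pi$ with $\Pi:=\pi_2(B)$. The key computational input is Whitehead's identification $H_4(K(\Pi,2);\Z)\cong\Gamma(\Pi)$.

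First I would establish that the assignment $(f\colon X\to B)\mapsto f_*[X]\in H_4(B;\Z^w)$ descends, after fixing a reference Poincar\'e complex $X_0\in\mcS_4^{\PD}(B,w)$ (which exists by hypothesis) and subtracting, to an injection on $\mcS_4^{\PD}(B,w)$. Since $B$ is 3-coconnected and $f$ is a 3-equivalence, a minimal cell structure realizes $X$ as $B$ with 4-cells attached; Poincar\'e duality pins down the attaching data, up to homotopy over $B$, by the fundamental class. This embeds $\mcS_4^{\PD}(B,w)$ as a torsor-like subset of $H_4(B;\Z^w)$.

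Second, I would analyze $H_4(B;\Z^w)$ via the Serre spectral sequence of the fibration in $w$-twisted homology. The relevant low-degree terms are $E^2_{0,4}=\Z^w\otimes_{\Z\pi}\Gamma(\Pi)$ and $E^2_{4,0}=H_4(B\pi;\Z^w)$, giving (modulo differentials) an exact sequence
\[
\Z^w\otimes_{\Z\pi}\Gamma(\Pi)\xrightarrow{\alpha} H_4(B;\Z^w)\xrightarrow{\beta} H_4(B\pi;\Z^w)\to 0.
\]
The two invariants landing in $\Z/|\pi|\times\Her(\pi_2(B))$ are then: (i) the composition with $\beta$, using that $H_4(B\pi;\Z^w)$ is $|\pi|$-torsion by finiteness of $\pi$, so the image of $\mcS_4^{\PD}(B,w)$ lies in a cyclic subgroup of order dividing $|\pi|$; and (ii) the intersection form $\lambda_X\in\Her(\pi_2(B))$, which arises as the image of the $\Gamma(\Pi)$-class under the Hermitianization/polarization map $\Gamma(\Pi)\to\Her(\Pi)$ sending a quadratic refinement to its associated bilinear form. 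Since the polarization map on $\Z^w\otimes_{\Z\pi}\Gamma(\Pi)$ kills precisely the torsion subgroup, the combined kernel is $\Tors(\Z^w\otimes_{\Z\pi}\Gamma(\Pi))$.

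The main obstacle will be controlling both the higher Serre differentials into $E^*_{0,4}$ (notably from $E^2_{3,2}=H_3(\pi;\Pi\otimes\Z^w)$) and the Hambleton--Kreck-style indeterminacy coming from self-equivalences of $B$ that are the identity on the polarization; I would need to show that neither of these can absorb torsion classes in $\Z^w\otimes_{\Z\pi}\Gamma(\Pi)$. This is where the finiteness of $\pi$ is crucial, via a transfer argument to the universal cover, together with the non-emptiness of $\mcS_4^{\PD}(B,w)$ to bootstrap geometric realizability from $X_0$. A second technical step is identifying the \emph{geometric} equivariant intersection form $\lambda_X$ with the \emph{algebraic} polarization of the $\Gamma$-class of $X$, which I would handle by a chain-level computation on the universal cover using equivariant Poincar\'e duality and the standard description of the Hopf/Whitehead quadratic construction.
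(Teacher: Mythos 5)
Your overall strategy of injecting $\mcS_4^{\PD}(B,w)$ into $H_4(B;\Z^w)$ via fundamental classes, and then analyzing that group, is exactly right and matches the paper. But the way you propose to analyze $H_4(B;\Z^w)$ — via the Serre spectral sequence of $K(\Pi,2)\to B\to B\pi$ — does not deliver the required two-step exact sequence, and in fact the paper deliberately avoids the spectral sequence for this reason.

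Concretely, the filtration of $H_4(B;\Z^w)$ coming from the Serre spectral sequence has nontrivial intermediate pieces $E^\infty_{2,2}$ (a subquotient of $H_2(\pi;\Pi\otimes\Z^w)$) in addition to $E^\infty_{0,4}$ and $E^\infty_{4,0}$, and you have not explained why these disappear. Moreover, your identification of the right-hand term with the base edge-map target $H_4(B\pi;\Z^w)$ is wrong: that group is only $|\pi|$-torsion, but is generally not cyclic of order $|\pi|$. The $\Z/|\pi|$ in the theorem arises quite differently. The paper takes a cell structure $X=K\cup_\alpha D^4$ with $3$-skeleton $K$, shows $H_3^w(K)=0$, and uses the chain of isomorphisms
\[
H_4^w(B)\xrightarrow{\ \cong\ }H_4^w(B,K)\xleftarrow{\ \cong\ }\pi_4(B,K)\otimes_{\Z\pi}\Z^w\xrightarrow{\ \cong\ }\pi_3(K)\otimes_{\Z\pi}\Z^w.
\]
It then compares the Whitehead exact sequences for $\wt X$ and $\wt K$ to get the short exact sequence $0\to\Gamma(\pi_2K)\to\pi_3(K)\to H_3(\wt K)\to 0$ with $H_3(\wt K)\cong\Z\pi/N^w$, and tensoring by $\Z^w$ (using vanishing of $\Tor_1^{\Z\pi}(\Z\pi/N^w,\Z^w)$) produces
\[
0\to\Gamma(\pi_2B)\otimes_{\Z\pi}\Z^w\xrightarrow{i}H_4^w(B)\xrightarrow{q}\Z/|\pi|\to 0,
\]
with $q(f_*[X])=1$ and $i(\lambda\otimes1)=|\pi|\,f_*[X]$. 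The $\Z/|\pi|$ thus comes from $\Z\pi/N^w\otimes_{\Z\pi}\Z^w$, not from group homology of $\pi$.

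There is also a conceptual slip in your description of the second invariant. The map to $\Her(\pi_2B)$ records the equivariant intersection form $\lambda_X$ directly; it is not obtained by ``polarizing'' a quadratic $\Gamma$-class, and the statement that the polarization map kills exactly the torsion of $\Z^w\otimes_{\Z\pi}\Gamma(\Pi)$ is not something you justify nor what the paper uses. What actually happens is: fixing the Hermitian form $\lambda$ fixes the image $\wt g_*[\wt Y]\in H_4(\wt B)\cong\Gamma(\Pi)$ (via $\zeta(-\cap[\wt X])$), and the transfer relation $p_*\tr_*=|\pi|$ then forces $g_*[Y]-f_*[X]$ to be torsion; identifying that torsion with $\Tors(\Gamma(\Pi)\otimes_{\Z\pi}\Z^w)$, and showing $q$ vanishes on it, is the content of Theorem 3.3 (which, in the paper, needs the additional homomorphisms $\kappa_1,\kappa_2,\kappa_3$ and a Lefschetz/transfer argument — none of which appear in your outline). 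You correctly flag ``controlling the higher Serre differentials'' as the main obstacle, but you do not supply a way past it; the paper's cell-attachment/Whitehead-sequence route is precisely the mechanism that makes the obstacle disappear.
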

Hambleton-Kreck {\cite[Thm 1.1]{hambleton-kreck}} prove this result in the oriented case $w=0$ but we need the above generalization to the non-orientable case. The general case as well as \cref{thm:teichner} below were obtained by the second author in his PhD-thesis \cite{teichnerthesis} but have not yet been published, so we'll review them in this paper.

For a hermitian form $\lambda$ on $\pi_2 B$, let $\mcS_4^{\PD}(B,w,\lambda)$
be the subset of $\mcS_4^{\PD}(B,w)$ of $B$-polarized Poincar\'e complexes $(X,f)$ such that $\lambda$ is mapped to the intersection form $\lambda_{X}$ via $f^*$.

\begin{thm}
	\label{thm:teichner}
	Assume that $\mcS_4^{\PD}(B,w,\lambda)$ is non-empty. Then there  is a bijection
\[
\mcS_4^{\PD}(B,w,\lambda)\longleftrightarrow \Tors(\Z^w\otimes_{\Z\pi}\Gamma(\pi_2B)).
\]
\end{thm}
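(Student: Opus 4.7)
The plan is to derive \cref{thm:teichner} as a refinement of \cref{thm:hk}. The exact sequence in \cref{thm:hk} encodes a free action of $T:=\Tors(\Z^w\otimes_{\Z\pi}\Gamma(\pi_2(B)))$ on $\mcS_4^{\PD}(B,w)$ whose orbits inject into $\Z/|\pi|\times\Her(\pi_2B)$ via some pair $(X,f)\mapsto(\alpha(X,f),\lambda_X)$. It thus suffices to prove (i) that this action preserves $\lambda_X$, and hence restricts to a free action on $\mcS_4^{\PD}(B,w,\lambda)$, and (ii) that the restricted action is transitive, i.e.\ that the residual $\Z/|\pi|$-invariant $\alpha(X,f)$ is determined by $\lambda_X$ alone.

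For (i), I would inspect the construction underlying \cref{thm:hk}: the group $T$ acts by modifying a chosen CW-model $X=X^{(3)}\cup_\beta D^4$ by altering $\beta\in\pi_3(X^{(3)})$ by an element of $\Gamma(\pi_2B)$, embedded into $\pi_3(X^{(3)})$ via Whitehead's exact sequence for the universal cover. Since $\Gamma(\pi_2B)$ lies in $\ker(\pi_3(X^{(3)})\to H_3(X^{(3)}))$, the cellular chain-level attaching datum of the top cell is unchanged, and the equivariant intersection form, computed from this datum through Poincar\'e duality on $X^{(3)}$, is likewise preserved.

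For (ii), I would fix a reference $(X_0,f_0)\in\mcS_4^{\PD}(B,w,\lambda)$ and, for any other $(X,f)$ in this set, realise $X$ as $X_0^{(3)}\cup_{\beta}D^4$ for some $\beta\in\pi_3(X_0^{(3)})$ (possible after replacing $f$ by a homotopy equivalence over $B$). The hypothesis $\lambda_X=\lambda_{X_0}$ means $\beta-\beta_0$ maps to zero in $H_3(X_0^{(3)})$, and Whitehead's sequence lifts the difference to an element $\gamma\in\Gamma(\pi_2B)/\im(H_4)$. The Poincar\'e duality constraint, which is what distinguishes the full group $\Z^w\otimes_{\Z\pi}\Gamma(\pi_2B)$ from its torsion subgroup $T$, forces $\gamma\in T$; in other words, $\gamma$ acts on $X_0$ to produce $X$, establishing transitivity. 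Combined with the freeness from (i), this yields the claimed bijection.

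The main obstacle is the torsion claim in (ii): showing that the obstruction class lifting $\beta-\beta_0$ lands in the torsion subgroup $T$ and not merely in the full tensor product. This amounts to a chain-level computation identifying the free part of $\Z^w\otimes_{\Z\pi}\Gamma(\pi_2B)$ with a piece of the intersection pairing, so that $\lambda_X=\lambda_{X_0}$ forces vanishing of the free component. Unwinding this compatibility through the quadratic-refinement interpretation of $\Gamma$ as $\pi_3$ of a Moore space $M(\pi_2B,2)$, together with the identification of the equivariant intersection form with the $\pi$-equivariant Hurewicz pairing on $X^{(3)}$, is where the essential algebraic content of the theorem lies.
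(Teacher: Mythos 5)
Your proposal correctly identifies the overall strategy: the invariant of $\mcS_4^{\PD}(B,w)$ living in $\Z/|\pi|\times\Her(\pi_2B)$ must have constant $\Z/|\pi|$-component once the $\Her$-component is fixed at $\lambda$, and this is what would upgrade \cref{thm:hk} to the claimed bijection. Step (i) is essentially right, and the paper does run the argument through the injection $(Y,g)\mapsto g_*[Y]-f_*[X]\in H_4^w(B)$ from \cref{lem:2.1.1}, the short exact sequence
\[0\to \Gamma(\pi_2B)\otimes_{\Z\pi}\Z^w\xrightarrow{i} H_4^w(B)\xrightarrow{q}\Z/|\pi|\to 0,\]
and a reattach-the-top-cell argument for surjectivity. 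So the skeleton matches.

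However, step (ii) has a genuine gap, and you point at it yourself without filling it. Fixing $\lambda_Y=\lambda_X$ only tells you that $\tr_*(g_*[Y]-f_*[X])=0$ in $H_4(\wt B)$, hence $|\pi|\cdot(g_*[Y]-f_*[X])=0$, i.e.\ the difference is torsion in $H_4^w(B)$. But being torsion in $H_4^w(B)$ does \emph{not} formally imply $q$ vanishes on it (the cokernel $\Z/|\pi|$ is itself torsion), so you cannot conclude $\gamma\in T$ from this alone; the statement ``the Poincar\'e duality constraint forces $\gamma\in T$'' is exactly what needs proof, not a consequence of what you've set up. The paper's proof of this is the real content of the theorem (\cref{thm:mainteichner} and especially \cref{lem:mainteichner}): one must construct a $\Z\pi$-linear retraction $\kappa\colon\Gamma(\pi_2B)\otimes_{\Z\pi}\Z^w\to\Z$ with $\kappa(\lambda\otimes 1)=1$, then argue that any $x\in\Tors(H_4^w(B))$ satisfies $x+kf_*[X]=i(y)$ and that applying $\kappa$ to $|\pi|y=k(\lambda\otimes 1)$ forces $|\pi|\mid k$, hence $x\in i(T)$. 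Building $\kappa$ is nontrivial: it uses the primitivity of $\lambda_X$, the trace of Bauer's map $\kappa'\colon\Gamma(\pi_2X)\to\Hom_\Z(H_2(\wt X),H_2(\wt X))$, a central involution $\tau$ in the $2$-Sylow subgroup, the Lefschetz fixed point theorem to compute $\trace(\tau_*\kappa'(\lambda_X))=-2$, and a transfer/Steenrod-square argument (\cref{lem:even,cor:even}) to prove that $\trace\circ\tau_*\circ\kappa'$ has image in $2\Z$. None of this appears in your sketch, and no ``chain-level identification of the free part with the intersection pairing'' along the lines you gesture at will substitute for it. Moreover, \cref{lem:mainteichner} has a hypothesis ($w\equiv 1$ or the $2$-Sylow has order $>2$) that fails for $\pi\cong Q\rtimes\Z/2$ with $|Q|$ odd and $w$ the projection; the paper covers this residual case by a separate double-covering transfer argument that reduces to the already-established orientable case, and your proposal does not address it.
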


\begin{cor}
	If $\Tors(\Z^w\otimes_{\Z\pi}\Gamma(\pi_2X))=0$, the quadratic $2$-type $Q_X$ of $X$ determines the homotopy type of $X$. 
\end{cor}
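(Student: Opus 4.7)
The plan is to deduce the corollary directly from \cref{thm:teichner} applied to the Postnikov $2$-type of $X$. Given a second finite Poincar\'e $4$-complex $Y$ with an isomorphism $Q_Y \cong Q_X$, the strategy is to exhibit both $X$ and $Y$ as elements of the same set $\mcS_4^{\PD}(B,w,\lambda)$ for a suitable choice of $B$, and then use that this set is a singleton under the vanishing hypothesis.

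First I would set $B := P_2(X)$, the Postnikov $2$-type of $X$. This is a $3$-coconnected CW-complex determined up to homotopy equivalence by the triple $(\pi_1 X, \pi_2 X, k_X)$, and the canonical map $f_X\colon X \to B$ is a $3$-equivalence. Hence $(X, f_X)$ is already an element of $\mcS_4^{\PD}(B,w,\lambda_X)$, in particular showing that this set is non-empty so that \cref{thm:teichner} applies.

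Next, an isomorphism $Q_Y \toiso Q_X$ supplies an isomorphism of fundamental groups intertwining the orientation characters, a compatible isomorphism of second homotopy groups as $\Z\pi$-modules matching the $k$-invariants, and an identification of the equivariant hermitian forms $\lambda_Y \cong \lambda_X$. By the classical classification of Postnikov $2$-types this data is realized by a homotopy equivalence $P_2(Y) \simeq B$; composing with $Y \to P_2(Y)$ turns $Y$ into an element $(Y,f_Y) \in \mcS_4^{\PD}(B,w,\lambda_X)$.

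Finally I would invoke \cref{thm:teichner}: since the quadratic $2$-type determines $\pi_2 B \cong \pi_2 X$ together with the $\Z\pi$-action and the orientation character, the hypothesis $\Tors(\Z^w \otimes_{\Z\pi} \Gamma(\pi_2 X))=0$ forces $\mcS_4^{\PD}(B,w,\lambda_X)$ to contain a single homotopy class. Thus $(X,f_X)$ and $(Y,f_Y)$ are equivalent over $B$, which in particular yields a homotopy equivalence $X \simeq Y$. The main subtlety I expect is the second step, namely ensuring that an abstract isomorphism of quadratic $2$-types really can be promoted to a $B$-polarization of $Y$ with intersection form matching $\lambda_X$; this rests on the well-known fact that Postnikov $2$-types are classified by $(\pi_1,\pi_2,k)$ and on the naturality of the equivariant intersection form under $3$-equivalences.
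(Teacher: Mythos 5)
Your proposal is correct and is precisely the intended (unwritten) deduction: the paper states the corollary without proof as an immediate consequence of \cref{thm:teichner}, and your argument spells out exactly that deduction by putting both $X$ and $Y$ into the same set $\mcS_4^{\PD}(B,w,\lambda_X)$ via the isomorphism of quadratic $2$-types and the classification of Postnikov $2$-types, then using that this set is a singleton when the torsion group vanishes.
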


In the orientable case, the above torsion group vanishes if $\pi$ has cyclic or quaternion 2-Sylow-subgroups \cite{hambleton-kreck,bauer} whereas in the non-orientable case, the following is a simple non-vanishing result:

\begin{prop}
	\label{prop:2.2.2ii}
	Let $(X,w)$ be a Poincar\'e 4-complex with fundamental group $\pi$. If $\pi_2 (X)$ splits off a free module and there exists an element $g\in\pi$ with $g^2=1$ and $w(g)= -1$, then $\Tors(\Z^w\otimes_{\Z\pi}\Gamma(\pi_2X))\neq 0$. 
\end{prop}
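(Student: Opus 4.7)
The plan is to exhibit an explicit nontrivial $2$-torsion class in $\Z^w\otimes_{\Z\pi}\Gamma(\pi_2 X)$ arising from the free summand of $\pi_2 X$.

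First, I would reduce to the case $\pi_2 X=\Z\pi$. Writing $\pi_2 X\cong\Z\pi\oplus N$ and applying the Whitehead sum formula
\[
\Gamma(A\oplus B)\cong\Gamma(A)\oplus\Gamma(B)\oplus(A\otimes_{\Z}B)
\]
(which is $\Z\pi$-equivariant, with the diagonal action on $A\otimes_{\Z}B$), the additive functor $\Z^w\otimes_{\Z\pi}-$ splits off $\Z^w\otimes_{\Z\pi}\Gamma(\Z\pi)$ as a direct summand of $\Z^w\otimes_{\Z\pi}\Gamma(\pi_2 X)$, so any torsion found there survives.

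Next, I would single out the ``cross term''
\[
[1\mid g]\;:=\;\gamma(1+g)-\gamma(1)-\gamma(g)\;\in\;\Gamma(\Z\pi).
\]
Because $g^2=1$, a short calculation yields $g\cdot[1\mid g]=[g\mid 1]=[1\mid g]$, so in $\Z^w\otimes_{\Z\pi}\Gamma(\Z\pi)$ the relation $gx=w(g)x=-x$ collapses to $2[1\mid g]=0$. Hence this class is automatically $2$-torsion, and the whole problem reduces to showing it is nonzero.

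The main obstacle is precisely that nonvanishing. I would prove it by constructing a $\Z\pi$-linear map $\Gamma(\Z\pi)\to\Z/2$ (with $\Z/2$ carrying the trivial $\pi$-action, which agrees with the $w$-twisted action since $w\bmod 2$ is trivial) sending $[1\mid g]\mapsto 1$. By the universal property of $\Gamma$, this is equivalent to giving a $\pi$-invariant quadratic function $q\colon\Z\pi\to\Z/2$ with $q(1+g)-q(1)-q(g)=1$. Since $g^2=1$ and $g\ne1$, right multiplication by $g$ is a fixed-point-free involution on $\pi$, partitioning it into unordered pairs $\{h,hg\}$, and I would set
\[
q\Bigl(\sum_{h\in\pi}a_h\,h\Bigr)\;:=\;\sum_{\{h,\,hg\}} a_h\, a_{hg}\pmod 2.
\]
A routine verification shows $q$ is quadratic, that $\pi$-invariance follows because left multiplication by $k\in\pi$ permutes the pairs $\{h,hg\}$, and that $q(1+g)=1$ (the only contributing pair is $\{1,g\}$). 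The induced $\Z\pi$-linear map $\tilde q\colon\Gamma(\Z\pi)\to\Z/2$ then descends through $\Z^w\otimes_{\Z\pi}-$ and sends $[1\mid g]$ to $1$, so $[1\mid g]$ is a nonzero element of order $2$, completing the argument.
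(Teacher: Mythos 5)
Your proof is correct, and it takes a genuinely different route from the paper's. The paper invokes Hambleton--Kreck's structure theorem, which gives a full $\Z\pi$-module decomposition
\[
\Gamma(\Z\pi)\cong F\oplus \bigoplus_{g\in\pi\setminus\{1\},\,g^2=1}\Z\pi/\Z\pi(1-g)
\]
with $F$ free, and then reads off $\Tors(\Z^w\otimes_{\Z\pi}\Gamma(\Z\pi))\cong(\Z/2)^r$ directly, where $r$ counts involutions $g$ with $w(g)=-1$. You instead produce the required torsion element by hand: the cross term $[1\mid g]=v(1+g)-v(1)-v(g)$ is fixed by $g$ (since $g^2=1$), hence becomes $2$-torsion after tensoring with $\Z^w$, and you detect its nontriviality by constructing a $\pi$-invariant quadratic form $q\colon\Z\pi\to\Z/2$ from the pair-partition $\{h,hg\}$ of $\pi$ and using the universal property of $\Gamma$. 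Your verification that $q$ is quadratic, $\pi$-invariant, and satisfies $q(1+g)=1$ is all correct, and the reduction to the $\Gamma(\Z\pi)$-summand via the natural decomposition $\Gamma(A\oplus B)\cong\Gamma(A)\oplus\Gamma(B)\oplus(A\otimes_\Z B)$ matches the paper's first step. The trade-off: the paper's citation of the structure theorem yields the exact rank of the $2$-torsion for free, while your argument is self-contained and elementary but establishes only nonvanishing, which is all the proposition asserts.
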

\begin{rem}
	This applies to simple $4$-manifolds such as $X=\RP^4\#k\CP^2$ and $X=(\RP^2\times S^2)\#k\CP^2$ for $k\geq 1$. In these cases the quadratic $2$-type still determines the homotopy type by \cite[Theorem~3]{HKT94} when we restrict to manifolds instead of Poincar\'e complexes. 
	
	Considering $X=\RP^4\#\CP^2$ we have $\pi_2(X)\cong \Z[\Z/2]$ and by the proof of \cref{prop:2.2.2ii} we have $\Tors(\Z^w\otimes_{\Z[\Z/2]} \Gamma(\Z[\Z/2]))\cong \Z/2$. We will show in \cref{prop:rp4cp2} that there is a Poincar\'e complex with the same quadratic $2$-type as $X$ which is not homotopy equivalent to $X$. By the above this implies that it cannot be homotopy equivalent to a manifold. 
\end{rem}
\begin{rem}
For $X=\RP^2\times S^2$ we have that $\pi_2(X)\cong \Z\oplus \Z^w$ and thus $\Gamma(\pi_2(X))\cong \Z\oplus\Z\oplus\Z^w$ and $\Z^w\otimes \Gamma(\pi_2(X))\cong \Z/2\oplus\Z/2\oplus\Z$. Hence up to homotopy equivalence there are at most four Poincar\'e complexes with the same quadratic $2$-type as $X$. For manifolds this case was studied by Kim, Kojima and Raymond \cite{KKR92} and Hambleton and Kreck together with the second author \cite{HKT94}. Among the four homotopy types of Poincar\'e complexes there are three that are realized by manifolds. These are distinguished by their  intersection form on $\Z/2$-homology and either a $\Z/4$-valued quadratic refinement of the equivariant intersection form \cite{KKR92} or a $\Z/8$-valued Arf invariant \cite{HKT94}. Hambleton and Milgram \cite[§3]{HM78} showed that there is a Poincar\'e complex with this quadratic $2$-type which is not homotopy equivalent to a manifold. Its Spivak normal fibration admits no reduction to a vector bundle \cite[§4]{HM78}. Hence up to homotopy equivalence there are precisely four Poincar\'e with the quadratic 2-type of $\RP^2\times S^2$.
\end{rem}

We will recall the necessary background on the equivariant intersection form and its connection to Whitehead's quadratic functor in \cref{sec:background}. In \cref{sec:teichner}, we will then prove the statements from the introduction. 

\vspace{3mm}
\noindent \textbf{Acknowledgements:} The authors thank Matthias Kreck for insightful conversations and Mark Powell for helpful comments on a previous version - this all happened at the Max Planck Institute for Mathematics in Bonn. The first author was funded by the Deutsche Forschungsgemeinschaft (DFG, German Research Foundation) under Germany's Excellence Strategy - GZ 2047/1, Projekt-ID 390685813.

\section{Background} \label{sec:background}
Let $X$ be a finite connected Poincar\'e 4-complex with finite fundamental group $\pi$, orientation character $w:\pi_1X\to \{\pm 1\}$ and chosen fundamental class $[X]\in H_4(X;\Z^w)\cong \Z$. Here the coefficient group $\Z^w$ is $\Z$ viewed as a $\Z\pi$-module with action $gz=w(g)z$

We have an involution on $\Z\pi$ given by $g\mapsto \bar g:=w(g)g^{-1}$ and we use this involution to consider a right $\Z\pi$-module as a left $\Z\pi$-module if necessary. We fix a point $x_0\in X$ and a lift $\wt x_0\in \wt X$. We abbreviate $\pi_2(X,x_0)$ by $\pi_2(X)$ and write $H_*(-)$ for $H_*(-;\Z)$. Using the isomorphism $H^2_{cs}(\wt X)\xrightarrow{-\cap[X]}H_2(\wt X)\xleftarrow{\cong}\pi_2(\wt X)\xrightarrow{\cong}\pi_2(X)$, we can view the equivariant intersection form as a pairing on $\pi_2X$, or, as we prefer in this paper, as a pairing $H^2_{cs}(\wt X) \times H^2_{cs}(\wt X)\to\Z\pi$.
It is given by 
\[
\lambda_X(\alpha,\beta)=\lambda_X(\beta)(\alpha)=\sum_{g\in \pi}(g\alpha\cup \beta)\cap[X])g^{-1}=\sum_{g\in \pi}g\alpha\cap ((\beta\cap[X]))g^{-1}.
\]
The form $\lambda_X$ is hermitian with respect to our involution: For $h_i\in\pi$ we have
\begin{align*}
\lambda_X(h_1\alpha,h_2\beta)&=\sum_{g\in \pi}((gh_1\alpha\cup h_2\beta)\cap[X])g^{-1}\\
&=\sum_{g\in \pi}((h_2^{-1}gh_1\alpha\cup \beta)\cap w(h_2)[X])g^{-1}\\
&=\sum_{g\in \pi}((g\alpha\cup \beta)\cap w(h_2)[X])h_1g^{-1}h_2^{-1}\\
&=h_1\lambda(\alpha,\beta)w(h_2)h_2^{-1} = h_1\lambda(\alpha,\beta)\bar h_2
\end{align*}
and $\lambda_X(\alpha,\beta)=\overline{\lambda_X(\beta,\alpha)}$.

The $2$-type $B$ of $X$ is the 2nd stage of a Postnikov tower for $X$. This means that there is a $3$-equivalence $f\colon X\to B$ and $B$ fibers over $K(\pi,1)$ with fiber $K(\pi_2(X),2)$. Such fibrations are classified by the unique obstruction $k_X$ for finding a section.
\[
k_X\in H^3(K(\pi,1);\pi_2(K(\pi_2(X),2)))=H^3(\pi;\pi_2(X))
\]
is by definition the {\em $k$-invariant} of $X$.
The \emph{quadratic $2$-type} of $X$ is
\[Q_X:=(\pi,w,\pi_2(X),k_X, \lambda_X).\]

\begin{defi}
	Let $A$ be an abelian group. Then $\Gamma(A)$ is the abelian group with generators $v(a)$  for all $a\in A$ and the following relations:
	 \[\{v(-a)-v(a)\mid a\in A\}  \quad \text{ and } 
	\]
	 \[\{v(a+b+c)-v(a+b)-v(b+c)-v(c+a)+v(a)+v(b)+v(c)\mid a,b,c\in A\}.\]
\end{defi}

\begin{lemma}[{\cite[page 62]{whitehead}}]
	\label{lem:gammafree}
	If $A$ is free abelian with basis $B$, then $\Gamma(A)$ is free abelian with basis $\{v(b), v(b+b')-v(b)-v(b')\mid b,b'\in B\}$. Another important case is $\Gamma(\Z/2)\cong\Z/4$. 
\end{lemma}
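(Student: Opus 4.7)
The plan rests on three observations: the defining relations make $v$ the universal quadratic map; the quadratic map $a \mapsto a \otimes a$ provides a useful auxiliary homomorphism for a linear-independence argument; and the relations suffice to rewrite $v$ of any $\Z$-linear combination of basis elements in terms of the claimed generators.

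First I would rearrange the defining relations. Writing $[a,b] := v(a+b) - v(a) - v(b)$, the first relation set gives $v(-a) = v(a)$, and setting $a = b = 0$ in the second set yields $v(0) = 0$. The second set then rearranges, for each $c$, to $[a+b,c] = [a,c] + [b,c]$, so combined with the evident symmetry $[a,b] = [b,a]$ the pairing $[-,-]$ is a symmetric $\Z$-bilinear form on $A$ with values in $\Gamma(A)$. Conversely, a function $q \colon A \to G$ with $q(-a) = q(a)$ and $\Z$-bilinear polarization satisfies both families of relations, so $v \colon A \to \Gamma(A)$ is the universal quadratic map in the sense that $q$ factors uniquely as $\hat q \circ v$.

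To get linear independence of the proposed basis, fix a total order on $B$ and observe that $a \mapsto a \otimes a \in A \otimes_\Z A$ is quadratic (by direct expansion of $(a+b+c) \otimes (a+b+c)$). The induced homomorphism $\phi \colon \Gamma(A) \to A \otimes A$ sends
\[
\phi(v(b)) = b \otimes b, \qquad \phi\bigl(v(b+b') - v(b) - v(b')\bigr) = b \otimes b' + b' \otimes b;
\]
since $A \otimes A$ is $\Z$-free on $\{b \otimes b' : b, b' \in B\}$, these images are manifestly $\Z$-independent, hence so are their preimages in $\Gamma(A)$.

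For spanning, bilinearity of $[-,-]$ together with $v(nb) = n^2 v(b)$ (proved by induction starting from $v(2b) = 4v(b)$, which is the triple relation with $(a,b,c) = (b, b, -b)$) gives
\[
v\Bigl(\sum_i n_i b_i\Bigr) \;=\; \sum_i n_i^2 v(b_i) \;+\; \sum_{i<j} n_i n_j \bigl(v(b_i + b_j) - v(b_i) - v(b_j)\bigr),
\]
exhibiting every generator of $\Gamma(A)$ as a $\Z$-combination of the claimed basis. Finally, for $\Gamma(\Z/2)$, let $t$ be the non-trivial element; the triple relation with $a = b = c = t$, using $2t = 0$ and $v(0) = 0$, gives $4 v(t) = 0$, while the quadratic map $\Z/2 \to \Z/4$ sending $t \mapsto 1$ (a short case check on $a,b,c \in \{0,t\}$) induces a surjection $\Gamma(\Z/2) \twoheadrightarrow \Z/4$, so $\Gamma(\Z/2)$ is cyclic of order exactly $4$. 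The only step requiring real care is the identity $v(nb) = n^2 v(b)$; once that and the bilinearity of $[-,-]$ are in hand, both parts of the lemma follow mechanically.
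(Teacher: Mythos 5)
Your argument is correct and complete. Note that the paper itself does not prove this lemma but cites it directly to Whitehead's paper, so there is no in-paper proof to compare against; your proposal is essentially Whitehead's own argument. The structure is exactly the expected one: show that $[a,b]:=v(a+b)-v(a)-v(b)$ is symmetric biadditive and that $v(-a)=v(a)$, hence $v$ is the universal quadratic map; deduce the identity $v(\sum n_i b_i)=\sum n_i^2v(b_i)+\sum_{i<j}n_in_j[b_i,b_j]$ for spanning; and use the quadratic map $a\mapsto a\otimes a$ into $A\otimes A$ for linear independence of the proposed basis (implicitly taking the pairs $\{b,b'\}$ with $b\neq b'$, which is the intended reading of the statement, since $[b,b]=2v(b)$). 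The $\Gamma(\Z/2)\cong\Z/4$ computation via $4v(t)=0$ together with the explicit quadratic surjection onto $\Z/4$ is also standard. The only small points worth making explicit in a written-up version are the inductive expansion $v(\sum x_i)=\sum v(x_i)+\sum_{i<j}[x_i,x_j]$ used in the spanning step, and the remark that the claimed basis runs over unordered pairs of distinct elements of $B$; neither affects correctness.
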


Let $Y$ be a simply-connected CW-complex and let $\eta\colon S^3\to S^2$ be the Hopf map. It induces a map $\widehat{\eta}\colon \Gamma(\pi_2(Y))\to \pi_3(Y)$ given by $v(\alpha)\mapsto \alpha\circ\eta$.

\begin{thm}
	[{\cite[Sections 10 and 13]{whitehead}}]
	For every simply-connected CW-complex $Y$ we have {\em Whitehead's exact sequence}
	\[\ldots\pi_4(Y)\to H_4(Y)\xrightarrow{b_4}\Gamma(\pi_2(Y))\xrightarrow{\widehat{\eta}} \pi_3(Y)\to H_3(Y;Z)\to 0.\]
\end{thm}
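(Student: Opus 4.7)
The plan is to derive Whitehead's sequence from the low-degree terms of the Serre spectral sequence of the second Postnikov section, using the Eilenberg--MacLane identification $H_4(K(A,2);\Z)\cong\Gamma(A)$ as the main algebraic input.

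Let $p\colon Y\to B:=K(\pi_2 Y,2)$ be the canonical $3$-equivalence to the second Postnikov section, and let $F$ be its homotopy fiber. The long exact sequence of homotopy groups gives $\pi_n(F)=0$ for $n\leq 2$ and $\pi_n(F)\cong\pi_n(Y)$ for $n\geq 3$, so by Hurewicz $H_3(F)\cong\pi_3(Y)$ and $\pi_4(Y)\cong \pi_4(F)\twoheadrightarrow H_4(F)$. Substituting $H_2(B)\cong\pi_2 Y$, $H_3(B)=0$ and $H_4(B)\cong\Gamma(\pi_2 Y)$, every term on the total-degree $3$ and $4$ anti-diagonals of $E_2^{p,q}=H_p(B;H_q(F))$ vanishes except the two corners; the resulting $5$-term exact sequence reads
\[H_4(F)\to H_4(Y)\to \Gamma(\pi_2 Y)\xrightarrow{\tau}\pi_3(Y)\to H_3(Y)\to 0,\]
where $\tau$ is the transgression $d_4$. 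Precomposing the first map with the Hurewicz surjection $\pi_4(Y)\twoheadrightarrow H_4(F)$ identifies it, via naturality of Hurewicz, with the Hurewicz map $\pi_4(Y)\to H_4(Y)$, producing the desired sequence.

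It remains to identify $\tau$ with $\widehat\eta$. Both transformations are natural in $\pi_2$-preserving maps between simply-connected spaces, so by pulling back along a representing map $S^2\to Y$ of a class $\alpha\in\pi_2(Y)$ it suffices to treat the universal case $Y=S^2$, where $\Gamma(\Z)=\Z\cdot v(1)$ and $\pi_3(S^2)=\Z\cdot\eta$. In the fibration over $B=\CP^\infty$, the vanishing of $H_3(S^2)$ and $H_4(S^2)$ forces $d_4$ to be an isomorphism $\Z\to\Z$, which must match $\widehat\eta(v(1))=\eta$ up to sign. The principal obstacle is the Eilenberg--MacLane computation $H_4(K(A,2))\cong \Gamma(A)$: Whitehead's original argument bypasses spectral sequences and builds the sequence directly from a CW chain model, but the homological content of his construction is exactly what the Serre spectral sequence encodes, and passing through it makes the identification of the transgression with $\widehat\eta$ transparent.
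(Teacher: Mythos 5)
The paper does not supply its own proof of this result — it is attributed to Whitehead (Sections 10 and 13 of his 1950 paper), where the sequence is built by hand from a cellular chain model for the universal cover (the ``certain exact sequence'' formalism). Your argument instead derives it from the homology Serre spectral sequence of the second Postnikov fibration $F\to Y\to K(\pi_2 Y,2)$, feeding in the Eilenberg--MacLane identifications $H_3(K(A,2))=0$, $H_4(K(A,2))\cong\Gamma(A)$. That is a genuinely different, and standard modern, route: the Postnikov/spectral-sequence approach trades Whitehead's explicit chain-level manipulations for a structural argument in which the five-term exact sequence and the transgression $d_4$ do the work. What this buys you is that the map $H_4(Y)\to\Gamma(\pi_2 Y)$ is visibly the edge homomorphism $p_*$ (which is exactly how \cref{cor:intform} of the paper uses $b_4$), and the identification of $d_4$ with $\widehat\eta$ reduces by naturality of $\Gamma$, $\pi_3$, and the spectral sequence to the single universal computation for the Hopf fibration $S^3\to S^2\to\CP^\infty$. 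What Whitehead's chain-level construction buys instead is independence from the Eilenberg--MacLane computation as an external input, and it extends more directly to the longer ``certain exact sequence'' in higher degrees.

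Two small points worth tightening if you write this up. First, your claim that ``every term on the total-degree $3$ and $4$ anti-diagonals vanishes except the two corners'' is slightly overstated for degree $3$ (the corner $E_2^{3,0}=H_3(B)$ also vanishes, so only one corner survives there); the conclusion, the five-term exact sequence, is nonetheless correct since the two rows $q=1,2$ and the columns $p=1,3$ all vanish in the relevant range. Second, your reduction to $Y=S^2$ tacitly uses that $\Gamma(\pi_2 Y)$ is generated by the elements $v(a)=\Gamma(\alpha_*)(v(1))$ for $\alpha\colon S^2\to Y$, which is true by the definition of $\Gamma$; it would be worth making that explicit. The sign ambiguity in matching $d_4$ with $\widehat\eta$ is harmless, since exactness is unaffected by replacing a map with its negative.
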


We will now explain how the equivariant intersection form $\lambda_X$ for a finite Poincar\'e 4-complex with finite fundamental group can be viewed as an element of $\Gamma(\pi_2(X))$. This is well-known, but for convenience and to fix notation we sketch the argument here.

Let $A$ free abelian. An element $\phi\in \Hom_\Z(\Hom_\Z(A,\Z),\Z),A)$ is symmetric if for all $f,g\in \Hom_\Z(A,\Z)$ we have $f(\phi(g))=g(\phi(f))$. And we denote the subgroup of symmetric homomorphisms by $\Sym_\Z(\Hom_\Z(A,\Z),A)$. Let $Y$ be a finite simply-connected CW-complex with $H_2(Y)$ free abelian. We define $\Sym_\Z(H^2(Y),H_2(Y))$ as $\Sym_\Z(\Hom_\Z(H_2(Y),\Z),H_2(Y))$ using the canonical isomorphism $H^2(Y)\to \Hom_\Z(H_2(Y),\Z)$.

\begin{thm}[{\cite[p.~96]{whitehead}}]
	\label{thm:cap}
	Let $Y$ be a finite simply-connected CW-complex with $H_2(Y)$ free abelian. There is an isomorphism
	\[\zeta\colon \Sym_\Z(H^2(Y),H_2(Y))\to \Gamma(\pi_2(Y))\]
	such that for every class $z\in H_4(Y)$ we have $\zeta(-\cap z)=b_4(z)$.
\end{thm}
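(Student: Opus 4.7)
The plan is to first construct $\zeta$ purely algebraically from the hypothesis that $H_2(Y)$ is free abelian, and then to verify the compatibility $\zeta(-\cap z)=b_4(z)$ by reducing to a cellular model of $Y$ and invoking the classical identification of $\pi_3$ of a wedge of $2$-spheres with $\Gamma$ of their $\pi_2$.

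For the first step, I would let $A:=H_2(Y)\cong\pi_2(Y)$ (Hurewicz), fix a basis $\{b_i\}$ of $A$ with dual basis $\{b_i^*\}\subset H^2(Y)$, and observe that a symmetric $\phi\in\Sym_\Z(H^2(Y),H_2(Y))$ is determined by the symmetric matrix $\phi_{ij}:=b_j^*(\phi(b_i^*))$. I would then define
$$
\zeta(\phi):=\sum_i \phi_{ii}\,v(b_i)+\sum_{i<j}\phi_{ij}\,\bigl(v(b_i+b_j)-v(b_i)-v(b_j)\bigr)\in\Gamma(A).
$$
By \cref{lem:gammafree}, $\zeta$ carries a basis bijectively to a basis of $\Gamma(A)$, hence is an isomorphism; basis-independence is checked directly using the defining relations of $\Gamma(A)$. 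Conceptually, both sides are canonically isomorphic to the symmetric square $\Sym^2 A$ via $v(a)\leftrightarrow a\odot a$ on one side and $\phi\leftrightarrow\sum\phi_{ij}\,b_i\odot b_j$ on the other, which makes $\zeta$ itself canonical.

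For the compatibility $\zeta(-\cap z)=b_4(z)$, I would use naturality in $Y$ and additivity in $z$ to reduce to analyzing a single $4$-cell. After choosing a CW structure on $Y$ with a single $0$-cell, no $1$-cells, $2$-cells indexed by $\{b_i\}$, and further $3$- and $4$-cells, the $2$-skeleton is $Y^{(2)}\simeq\bigvee_i S^2_i$, and the classical isomorphism $\pi_3(Y^{(2)})\cong\Gamma(A)$ sends $b_i\circ\eta$ to $v(b_i)$ and the Whitehead product $[b_i,b_j]_W$ to $\tau(b_i,b_j):=v(b_i+b_j)-v(b_i)-v(b_j)$. For a $4$-cell $e^4_\alpha$ with attaching map $\varphi_\alpha\colon S^3\to Y^{(3)}$, Whitehead's exact sequence identifies $b_4([e^4_\alpha])$ with the class of $[\varphi_\alpha]$ viewed in $\pi_3(Y^{(2)})\cong\Gamma(A)$. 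I would then check that the matrix entry $(b_i^*\cup b_j^*)([e^4_\alpha])$ equals the Hopf invariant of the $S^2_i$-component of $\varphi_\alpha$ (when $i=j$) or the linking number of the projections to $S^2_i$ and $S^2_j$ (when $i<j$); these numbers are precisely the coefficients of $[\varphi_\alpha]$ in the basis $\{v(b_i),\tau(b_i,b_j)\}$ of $\Gamma(A)$, by the classical cup-product interpretations of Hopf invariants and Whitehead products on mapping cones. Summing over the cellular representation $z=\sum n_\alpha[e^4_\alpha]$ will then yield the identity.

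The main obstacle will be the bookkeeping with the $3$-cells: a priori $[\varphi_\alpha]\in\pi_3(Y^{(3)})$ rather than $\pi_3(Y^{(2)})$, so I must show that the cycle condition $\partial z=0$ precisely cancels the ambiguity coming from the kernel of $\pi_3(Y^{(2)})\to\pi_3(Y^{(3)})$, so that the resulting image in $\Gamma(A)$ is canonical and matches $\zeta(-\cap z)$. This is exactly the content of the relevant portion of Whitehead's exact sequence; once this framework is in place, the remaining content of the proof is the classical identification of cup products on mapping cones with Hopf invariants and Whitehead products, which is routine.
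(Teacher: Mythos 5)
The paper gives no proof of \cref{thm:cap}; it is quoted directly from Whitehead (p.~96) as a black box, so there is no ``paper's proof'' to compare against, only Whitehead's original argument, which your sketch reproduces in spirit (pass to a cellular model, identify $\pi_3$ of the $2$--skeleton with $\Gamma$ of $\pi_2$, and read off the coefficients of attaching maps from cup products via Hopf invariants and Whitehead products). Your basis formula for $\zeta$ is correct, and I verified that it is basis-independent; the overall plan is sound.

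Two points deserve repair. First, the ``conceptual'' remark that both sides are canonically isomorphic to the symmetric square $(A\otimes A)_{\Sigma_2}$ is wrong over $\Z$. The natural map $\Gamma(A)\to(A\otimes A)_{\Sigma_2}$ determined by $v(a)\mapsto a\odot a$ sends $\tau(b_i,b_j):=v(b_i{+}b_j)-v(b_i)-v(b_j)$ to $2\,b_i\odot b_j$, and the map $(A\otimes A)_{\Sigma_2}\to\Gamma(A)$, $a\odot b\mapsto\tau(a,b)$, sends $b_i\odot b_i$ to $2v(b_i)$; neither is an isomorphism. What is true is that both $\Gamma(A)$ (via $v(a)\mapsto a\otimes a$, Whitehead) and $\Sym_\Z(\Hom_\Z(A,\Z),A)$ (via the slant identification $\Hom(\Hom(A,\Z),A)\cong A\otimes A$) are canonically isomorphic to the $\Sigma_2$--\emph{invariant} tensors $(A\otimes A)^{\Sigma_2}$, and your explicit $\zeta$ is exactly the composite of these two canonical isomorphisms expanded in a basis. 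This is also the cleanest way to get basis-independence; a ``direct check'' from the relations is unnecessary.

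Second, the step you yourself flag --- lifting from $\pi_3(Y^{(3)})$ to $\pi_3(Y^{(2)})$ when $3$--cells are present --- is a genuine gap as written; invoking ``the relevant portion of Whitehead's sequence'' does not by itself produce a well-defined lift whose cup products you control. A cleaner route than direct bookkeeping is to push the naturality you already use one step further: since only $H^2$, $H_2$, $\pi_2$, $H_4$ and $\Gamma(\pi_2)$ enter, replace $Y$ by its $5$--skeleton and then map (cellularly) to the $5$--skeleton of $K(\pi_2Y,2)$, which for $\pi_2Y\cong\Z^n$ has cells only in even dimensions and in particular no $3$--cells. Both sides of the identity are natural for this map ($b_4$ by construction of the exact sequence, $-\cap z$ by the projection formula), and the map is an isomorphism on $\pi_2$, $H_2$ and $H^2$, so the identity for $Y$ follows from the identity for this target. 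There the $2$--skeleton is a wedge of $2$--spheres, $b_4$ sends each $4$--cell to the class of its attaching map in $\pi_3(\bigvee S^2)\cong\Gamma(\Z^n)$, and the cup-product/Hopf-invariant/Whitehead-product dictionary you cite applies verbatim. With this reduction in place the argument closes.
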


Let $X$ be a finite Poincar\'e 4-complex with finite fundamental group $\pi$ and orientation character $w$. Note that by Poincar\'{e} duality
$H_2(\wt X)\cong \Hom_\Z(H_2(\wt X),\Z)$
and thus $H_2(\wt X)$ is free abelian. Let $\Herm(H^2(\wt X))$ denote the hermitian forms on $H^2(\wt X)$. We have a map
\[\Herm(H^2(\wt X))\to \Sym_\Z(H^2(\wt X),\Hom_\Z(H^2(\wt X), \Z)\]
given by
\[\lambda\mapsto (\beta\mapsto(\alpha\mapsto \ev_0\lambda(\alpha,\beta)) ),\]
where $\ev_0\colon \Z\pi\to\Z$ takes the coefficient at the identity element. This map is injective with image the $\pi$-linear homomorphisms, where we view $\Hom_\Z(H^2(\wt X), \Z)$ as a left $\Z\pi$-module with the involution on $\Z\pi$ coming from $w$.
After identifying $\Hom_\Z(H^2(\wt X), \Z)$ with $H_2(\wt X)$, the intersection form $\lambda_X\in \Herm(H^2(\wt X))$ maps to $-\cap [\wt X]$. Therefore, we consider the intersection form as an element of $\Gamma(\pi_2(X))$ as follows.

\begin{defi}
	\label{cor:intform2}
	Let $X$ be a finite Poincar\'e 4-complex with finite fundamental group $\pi$ and orientation character $w$. Then we can consider the element \[\intf_X:=\zeta(-\cap [\wt X])=b_4([\wt X])\in \Gamma(\pi_2(X)).\]
\end{defi}

\begin{cor}
	\label{cor:intform}
	Let $X$ be a finite Poincar\'e 4-complex with finite fundamental group and $2$-type $B$. Let $f\colon X\to B$ be a $3$-equivalence. Then there is an isomorphism $\Gamma(\pi_2(X))\cong H_4(\wt B,\Z)$ mapping $\intf_X$ to $\wt f_*[\wt X]$.
\end{cor}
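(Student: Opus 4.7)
The plan is to use Whitehead's exact sequence in two places: for $\widetilde X$ (which gives us $\intf_X$ via \cref{cor:intform2}), and for $\widetilde B$, where it simplifies drastically because $B$ is the $2$-type.

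First, I would observe that since $B$ is the second stage of a Postnikov tower of $X$, its only non-trivial homotopy groups are $\pi_1 B \cong \pi$ and $\pi_2 B \cong \pi_2 X$. Passing to the universal cover kills $\pi_1$, so $\widetilde B$ is simply connected with $\pi_2(\widetilde B) \cong \pi_2(X)$ and $\pi_i(\widetilde B) = 0$ for all $i \ge 3$; in other words $\widetilde B \simeq K(\pi_2 X, 2)$. Applying Whitehead's exact sequence to $\widetilde B$ and using $\pi_3(\widetilde B) = \pi_4(\widetilde B) = 0$, the map
\[
b_4^{\widetilde B} \colon H_4(\widetilde B) \xrightarrow{\ \cong\ } \Gamma(\pi_2(\widetilde B))
\]
is an isomorphism.

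Next, I would use naturality of Whitehead's sequence with respect to the lift $\widetilde f\colon \widetilde X \to \widetilde B$ of the $3$-equivalence $f$. This gives the commutative square
\[
\begin{array}{ccc}
H_4(\widetilde X) & \xrightarrow{\ b_4^{\widetilde X}\ } & \Gamma(\pi_2(\widetilde X)) \\
{\scriptstyle \widetilde f_*}\downarrow & & \downarrow{\scriptstyle \Gamma(\pi_2(\widetilde f))} \\
H_4(\widetilde B) & \xrightarrow{\ b_4^{\widetilde B}\ } & \Gamma(\pi_2(\widetilde B)).
\end{array}
\]
Because $f$ is a $3$-equivalence, so is $\widetilde f$, and in particular $\pi_2(\widetilde f)$ is an isomorphism; hence $\Gamma(\pi_2(\widetilde f))$ is an isomorphism as well. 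Composing, I define the desired isomorphism as
\[
\Phi \;:=\; \bigl(b_4^{\widetilde B}\bigr)^{-1} \circ \Gamma(\pi_2(\widetilde f)) \colon \Gamma(\pi_2(X)) \xrightarrow{\ \cong\ } H_4(\widetilde B).
\]

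Finally, chasing the fundamental class through the diagram, I obtain
\[
\Phi(\intf_X) \;=\; \bigl(b_4^{\widetilde B}\bigr)^{-1}\!\bigl(\Gamma(\pi_2(\widetilde f))(b_4^{\widetilde X}[\widetilde X])\bigr) \;=\; \bigl(b_4^{\widetilde B}\bigr)^{-1}\!\bigl(b_4^{\widetilde B}(\widetilde f_*[\widetilde X])\bigr) \;=\; \widetilde f_*[\widetilde X],
\]
using the definition $\intf_X = b_4^{\widetilde X}[\widetilde X]$ from \cref{cor:intform2} together with commutativity of the square. This gives the claimed isomorphism.

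The only substantive content is the recognition that $\widetilde B \simeq K(\pi_2 X, 2)$, which turns Whitehead's exact sequence into the isomorphism $H_4(\widetilde B)\cong\Gamma(\pi_2(\widetilde B))$; everything else is naturality. I do not expect any real obstacle, since the identification $\pi_2(X) \cong \pi_2(\widetilde B)$ via the $3$-equivalence $\widetilde f$ is immediate, and Whitehead's sequence is natural by its construction in \cite{whitehead}.
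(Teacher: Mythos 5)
Your proof is correct and follows essentially the same route as the paper: use $\pi_3(\wt B)=\pi_4(\wt B)=0$ so Whitehead's sequence gives $H_4(\wt B)\cong\Gamma(\pi_2(\wt B))$, then use naturality of the sequence along $\wt f$ to transport $\intf_X=b_4[\wt X]$ to $\wt f_*[\wt X]$. The paper presents the same naturality square (with the arrows written in the opposite sense), so there is no substantive difference.
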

\begin{proof}
	Since $B$ is the $2$-type of $X$, we have $\pi_3(B)=\pi_4(B)=0$. Consider the diagram:
	\[\xymatrix{
		\pi_4(X)\ar[r]\ar[d]&H_4(\wt X)\ar[r]\ar[d]^{\wt f_*}&\Gamma(\pi_2(X,x_0))\ar[r]\ar[d]^\cong&\pi_3(X)\ar[r]\ar[d]&0\\
		0\ar[r]&H_4(\wt B)\ar[r]^-\cong&\Gamma(\pi_2(B,f(x_0)))\ar[r]&0&	}\]
	From the commutativity of the diagram and \cref{cor:intform2} it follows that $\intf_X$ is mapped to $\wt f_*[\wt X]$ under the composition of the two isomorphisms.
\end{proof}

\begin{cor}
	\label{cor:x}
Let $X=K\cup_\alpha D^4$ be a finite Poincar\'e complex with finite fundamental group, with orientation character $w$ and with $3$-skeleton $K$. Under the map $\Gamma(\pi_2X)=\Gamma(\pi_2K)\to \pi_3(K)$ the element $\intf_X$ is mapped to $N^w\alpha$ (up to a unit), where $N^w=\sum_{g\in\pi}w(g)g\in\Z\pi$.
\end{cor}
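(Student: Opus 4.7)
My plan is to pass to the universal cover and reduce everything to a statement about $\wt K$, which is simply-connected and $3$-dimensional. First I would record the structure of $\wt X$: since $\pi$ is finite, $\wt X=\wt K\cup\bigsqcup_{g\in\pi}D^4_g$, with the $g$-th cell attached along $g\cdot\wt\alpha\colon S^3\to\wt K$ for a fixed lift $\wt\alpha$ of $\alpha$. Because $\wt X$ is simply-connected (hence orientable), and its orientation is compatible with the $w$-twisted orientation of $X$, a cellular representative of $[\wt X]\in H_4(\wt X;\Z)$ is $\sum_g w(g)\,g\cdot e^4=N^w\cdot e^4$. Attaching $4$-cells preserves $\pi_2$, so $\pi_2(\wt K)=\pi_2(\wt X)$ and thus $\Gamma(\pi_2 K)=\Gamma(\pi_2 X)$; since $H_4(\wt K)=0$, Whitehead's sequence for $\wt K$ collapses to a short exact sequence showing $\hat\eta_K\colon\Gamma(\pi_2\wt K)\hookrightarrow\pi_3(\wt K)$ is injective. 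So it suffices to identify $\hat\eta_K(b_4([\wt X]))\in\pi_3(\wt K)=\pi_3(K)$.

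The crucial step is to characterize $b_4([\wt X])$ through the pair $(\wt X,\wt K)$. Since the pair is $3$-connected, the relative Hurewicz theorem gives $\pi_4(\wt X,\wt K)\cong H_4(\wt X,\wt K)$; combined with the injection $H_4(\wt X)\hookrightarrow H_4(\wt X,\wt K)$ (injective because $H_4(\wt K)=0$), this lifts $[\wt X]$ to a homotopy class $\xi\in\pi_4(\wt X,\wt K)$. I would then argue that
\[
\hat\eta_K(b_4([\wt X]))=\partial^{\pi}(\xi)\in\pi_3(\wt K),
\]
where $\partial^{\pi}$ is the connecting map of the homotopy long exact sequence of the pair. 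This equality is the technical heart of the argument; it should follow by chasing the naturality diagram relating Whitehead's sequences for $\wt K$ and $\wt X$ with the long exact sequences of $(\wt X,\wt K)$ in both homotopy and homology, or more directly from Whitehead's original construction of $b_4$ via the $3$-skeleton. This is the step I expect to require the most care.

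Granted the above characterization, the calculation is mechanical. In the basis of $4$-cells, $H_4(\wt X,\wt K)=\bigoplus_g\Z$ and the image of $[\wt X]$ is $\sum_g w(g)\,g\,e^4$, so $\xi=\sum_g w(g)\,[g\,e^4]\in\pi_4(\wt X,\wt K)$; the map $\partial^{\pi}$ sends $[g\,e^4]$ to its attaching map $g\wt\alpha$, whence $\partial^{\pi}(\xi)=\sum_g w(g)\,g\wt\alpha=N^w\cdot\wt\alpha$. Under the isomorphism $\pi_3(\wt K)\cong\pi_3(K)$ induced by the covering map, this matches $N^w\cdot\alpha$. The ambiguity in the choice of the lift $\wt\alpha$ (which may be altered by left multiplication by some $g\in\pi$) together with the choice of orientation of $[\wt X]$ (a sign) precisely accounts for the ``up to a unit of $\Z\pi$'' in the statement.
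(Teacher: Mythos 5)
Your plan is essentially the paper's argument: pass to the universal cover, use the Whitehead sequences for $\wt X$ and $\wt K$ together with the long exact sequences of the pair $(\wt X,\wt K)$ in homotopy and homology, identify $[\wt X]$ cellularly with $N^w e^4$, and then compare $b_4$ with the relative boundary $\partial\colon\pi_4(\wt X,\wt K)\to\pi_3(\wt K)$. The observation that $[\wt X]=\pm N^we^4$ (via $g_*[\wt X]=w(g)[\wt X]$) and the use of the relative Hurewicz isomorphism for the $3$-connected pair are exactly as in the paper.

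The one place where you should be more careful is the ``crucial step'' you defer, namely $\hat\eta_K(b_4([\wt X]))=\partial^{\pi}(\xi)$ on the nose. What the naturality diagram you describe actually delivers, without further input, is weaker: $\hat\eta_K(b_4([\wt X]))$ maps to $0$ in both $\pi_3(X)$ and $H_3(\wt K)$, so it equals $\partial^{\pi}$ of some class whose Hurewicz image lies in $\Z N^w\subseteq\Z\pi$, i.e.\ it is \emph{some integer multiple} $m\,N^w\alpha$. Identifying $m=\pm1$ requires one of two additional inputs: either unwind Whitehead's original construction of $b_4$ via the $3$-skeleton to see that $\hat\eta_K\circ b_4$ literally \emph{is} the composite $H_4(\wt X)\hookrightarrow H_4(\wt X,\wt K)\cong\pi_4(\wt X,\wt K)\xrightarrow{\partial}\pi_3(\wt K)$ (which is what you gesture at), or — as the paper does — observe that $b_4([\wt X])$ corresponds under $\zeta$ to $-\cap[\wt X]$, which is an isomorphism, hence a primitive element of the free abelian group $\Gamma(\pi_2 K)$; injectivity of $\hat\eta_K$ (since $H_4(\wt K)=0$) then forces $m$ to be a unit. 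The paper's route avoids having to quote the fine structure of Whitehead's construction, so you may find it cleaner to replace your deferred lemma with that primitivity argument. Your accounting of the ``up to a unit'' ambiguity via choice of lift $\wt\alpha$ and orientation of $[\wt X]$ is a reasonable supplement but is not by itself a substitute for pinning down the integer $m$.
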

\begin{proof}
We can assume that $\pi_2X$ is nontrivial since otherwise $X$ is homotopy equivalent to either a sphere $S^4$ or a projective plane $\RP^4$.
The Whitehead sequences for $X$ and $K$ fit into the following commutative diagram:

\[\xymatrix{
	H_4(\wt X)\ar[r]^-{b_4}&\Gamma(\pi_2X)\ar[r]^-{\widehat{\eta}}&\pi_3(X)\ar[r]&H_3(\wt X)\ar[r]&0\\	
	0=H_4(\wt K)\ar[r]\ar[u]&\Gamma(\pi_2K)\ar@{=}[u]\ar[r]^-{\widehat{\eta}}&\pi_3(K)\ar[r]\ar[u]&H_3(\wt K)\ar[u]\ar[r]&0\\
	&&\pi_4(X,K)\ar[u]\ar[r]^-\cong&H_4(\wt X,\wt K)\cong\Z\pi\ar[u]&\\
	&&\pi_4(X)\ar[u]\ar[r]&H_4(\wt X)\cong\Z\ar[u]^{N^w}&
	}\]
There is a choice of isomorphism $\Z\pi\cong \pi_4(X,K)$ such that $1$ is mapped to $\alpha$ under $\pi_4(X,K)\to \pi_3(K)$. Since $\lambda_X$ comes from $H_4(\wt X)$, the image $\widehat{\eta}(\lambda_X)\in \pi_3(K)$ lies in the kernel of the maps to $\pi_3(X)$ and $H_3(\wt K)$. Therefore, by a quick diagram chase, $\widehat{\eta}(\intf_X)$ is a multiple of $N^w\alpha$ and since $(-\cap [\wt X])\in \Sym_\Z(H^2(\wt X),H_2(\wt X,\Z))$ is an isomorphism and hence primitive, it is mapped to $N^w\alpha$ up to a unit.	
\end{proof}

In \cref{sec:teichner} we will need the following results about the transfer map.

Let $U\leq \pi$ have finite index. Let $X$ be a CW-complex with fundamental group $\pi$ and let $\widehat{X}$ denote the finite covering of $X$ with fundamental group $U$. Then $X$ and $\widehat{X}$ have the same universal covering and let $C_*$ denote its cellular $\Z\pi$-chain complex. We can consider it as a right $\Z\pi$-module using the involution on $\Z\pi$. If $M$ is a $\Z\pi$-module we can restrict the action to $U$ and consider the projection
\[C_*\otimes_{\Z U}M\xrightarrow{p}C_*\otimes_{\Z\pi}M\]
inducing
\[p_*\colon H_*(\widehat{X};M)\to H_*(X;M).\]
On the chain level we obtain a map in the other direction by
\[\tr\colon C_*\otimes_{\Z\pi}M\to C_*\otimes_{\Z U}M,\quad c\otimes m\mapsto \sum_{Ug\in U\backslash \pi}cg^{-1}\otimes gm.\]
This map induces a map
\[\tr_*\colon H_*(X;M)\to H_*(\widehat{X};M)\]
and similarly one constructs a map
\[\tr^*\colon H^*(\widehat X;M)\to H^*(X;M).\]
\begin{lemma}
\label{lem:transfer}
These transfer maps have the following properties:
\begin{enumerate}
	\item $p_*\circ \tr_*$ and $\tr^*\circ p^*$ are multiplication by the index $[U:\pi]$.
	\item If $U\leq \pi$ is normal, then $\pi/U$ acts on $\widehat{X}$ and also on $H_*(\widehat X;M)$ and $H^*(\widehat X;M)$. In this case, $tr_*\circ p_*$ and $p^*\circ tr^*$ are multiplication by $\sum_{g\in \pi/U}g$.
	\item If $\pi$ acts trivially on a commutative ring $R$, then $tr^*$ is an $H^*(X;R)$-module homomorphism, i.e.
	\[tr^*(p^*(x)\cup y)=x\cup tr^*(y)\]
	for all $x\in H^*(X;R)$ and $y\in H^*(\widehat{X};R)$.
	\item The transfer commutes with Kronecker-products and Steenrod-squares:
	\[\langle tr^*(y),b\rangle=\langle y,tr_*b\rangle\in R\]
	and
	\[Sq^i(tr^*(y))=tr^*(Sq^i(y))\in H^*(X;\Z/2).\]
\end{enumerate}
\end{lemma}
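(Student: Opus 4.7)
The plan is to verify each item by direct chain-level calculations, since $p$ and $\tr$ are chain maps and the claims descend immediately to (co)homology. For (1), on $C_* \otimes_{\Z\pi} M$ the composition $p \circ \tr$ sends $c \otimes m$ to $\sum_{Ug \in U\backslash \pi} cg^{-1} \otimes gm$; each summand equals $c \otimes m$ modulo the $\Z\pi$-tensor relation, so the total is $[\pi:U] \cdot (c \otimes m)$, and the cohomological statement is the dual computation in $\Hom$-complexes. For (2), when $U$ is normal I observe that the coset $gU \in \pi/U$ acts on $C_* \otimes_{\Z U} M$ by $c \otimes m \mapsto cg^{-1} \otimes gm$, well-defined on cosets because replacing $g$ with $ug$ ($u \in U$) leaves the class invariant under the $\Z U$-tensor relation. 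The composition $\tr \circ p$ is then visibly the sum of these actions as $g$ ranges over $\pi/U$, i.e.~multiplication by $N = \sum_{g \in \pi/U} g$.

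For (3), I fix a $\Z\pi$-equivariant diagonal approximation $\Delta \colon C_* \to C_* \otimes_\Z C_*$; cup products at cochain level then take the form $(u \cup v)(c) = \sum u(c') v(c'')$ in the notation $\Delta c = \sum c' \otimes c''$. The cochain-level transfer $\tr^* \colon \Hom_{\Z U}(C_*, R) \to \Hom_{\Z\pi}(C_*, R)$ is obtained by dualizing the chain-level $\tr$; when $R$ has trivial $\pi$-action it reduces concretely to a sum of coset-representative translates of $f$, is well-defined on cosets by $\Z U$-equivariance of $f$, and is $\Z\pi$-equivariant by a reindexing. The projection formula then follows by a direct manipulation: equivariance of $\Delta$ makes each coset representative $g$ act diagonally on $c' \otimes c''$, $\Z\pi$-equivariance of $p^*x = x$ absorbs $g$ on the first tensor factor, and the remaining $g$-summation on the second factor reconstitutes $\tr^*(y)$.

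Part (4) splits into two identities. The Kronecker identity $\langle \tr^*(y), b \rangle = \langle y, \tr_*(b) \rangle$ is immediate from the (co)chain-level adjointness built into the definitions of $\tr^*$ and $\tr_*$. The Steenrod square identity is the main obstacle: $\Sq^i$ is defined via cup-$i$ products coming from a $\Z/2$-equivariant acyclic approximation of the diagonal, and one must verify that transfer commutes with each cup-$i$ operation separately. The cleanest route is to realize $\tr^*$ topologically as the stable map $\Sigma^\infty X_+ \to \Sigma^\infty \widehat X_+$ associated to the finite covering, whence naturality of Steenrod operations under stable maps yields the identity automatically; alternatively one repeats the cochain-level argument of (3) with each cup-$i$ operation, applying the same equivariant projection-formula manipulation to the higher diagonal in place of $\Delta$.
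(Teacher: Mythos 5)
Your chain-level verifications of (1) and (2) are correct and are essentially what the paper means by ``follow directly from the definitions.'' For (3), the paper invokes the realization of $\tr_*$ and $\tr^*$ by a stable map of suspension spectra, whereas you give a genuinely different, self-contained cochain-level proof via an equivariant diagonal approximation: after unwinding, the $\pi$-equivariance of $x$ together with triviality of the $\pi$-action on $R$ lets each coset representative be absorbed on the first tensor factor, and the remaining sum on the second factor reassembles $\tr^*(y)$. This works and is a reasonable alternative, trading a citation for an explicit computation. Likewise your adjointness derivation of the Kronecker identity in (4) is fine.

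However, your proposed ``alternative'' cochain proof of the Steenrod-square identity in (4) does not go through by merely ``repeating the argument of (3) with the higher diagonal.'' The projection formula in (3) crucially uses that one of the two factors, $p^*(x)$, is already $\pi$-equivariant, so a single sum over coset representatives is absorbed into that factor. In $\Sq^i(\tr^*(y))$ there is no such pulled-back factor: writing $\Sq^i$ via cup-$i$ products, $\Sq^i(\tr^*(y))(c)$ produces a double sum $\sum_{g,h} y(g^{-1}c')\,y(h^{-1}c'')$ over pairs of coset representatives, while $\tr^*(\Sq^i(y))(c)$ produces only the diagonal sum $\sum_{g} y(g^{-1}c')\,y(g^{-1}c'')$. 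Showing the off-diagonal terms are a coboundary is exactly the nontrivial input one needs; it requires the full equivariant/extended-power structure behind the $\cup_i$ operations (pairing $(g,h)$ with $(h,g)$ and invoking the coboundary formula $u\cup_i v + v\cup_i u = \delta(u\cup_{i+1}v)+\cdots$ inductively), not just the manipulation from (3). So you should rely on the stable-map route you already identify as cleanest — which is exactly what the paper does, citing Adams — rather than presenting the cochain argument as an easy alternative.
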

\begin{proof}
(1) and (2) follow directly from the definitions. For (3) and (4) one uses that for trivial modules $tr_*$ and $tr^*$ are induced by a stable map $tr\colon \Sigma^\infty X\to \Sigma^\infty\widehat{X}$ of suspension spectra, see \cite[Chapter 4]{adams}.
\end{proof}

\section{The \texorpdfstring{$\CP^2$-}{CP2-}stable classification}
\label{sec:teichner}
In this section we will recall the proof of \cite[Theorem 1.1]{hambleton-kreck} and then prove \cref{thm:main,thm:hk,thm:teichner}. At the end of the section we will prove \cref{prop:2.2.2ii} and \cref{prop:rp4cp2} which was mentioned in the introduction.

Let $X, Y$ be finite Poincar\'e 4-complexes with finite fundamental group $\pi$. Assume that $X$ and $Y$ have isomorphic quadratic $2$-types:
\[
Q_X=(\pi, w,\pi_2(X),k,\lambda_X)\cong Q_Y.
\]
In the following we will always assume that $\pi_2(X)\neq 0$ since otherwise $X$ is homotopy equivalent to either a sphere $S^4$ or a projective plane $\RP^4$.
Let $B$ be the $2$-type of $X$ and let $f\colon X\to B$ be a $3$-equivalence.
From the results of \cref{sec:background}, in particular \cref{cor:intform}, it follows that there is a $3$-equivalence $g\colon Y\to B$ such that $\wt g_*[\wt Y]=\wt f_*[\wt X]$. 

If in this situation there is a map $h\colon X\to Y$ making the diagram
\[\xymatrix{
	X\ar[rr]^h\ar[rd]_f&&Y\ar[ld]^g\\&B&}\]
homotopy commutative, then $h$ is an isomorphism on $\pi_1, \pi_2$. By \cref{cor:intform} there is an isomorphism $\Gamma(\pi_2(Y))\cong H_4(\wt B)$ mapping $\lambda_Y$ to $\wt g_*[\wt Y]$ and by assumption, $\wt g_*[\wt Y]=\wt f_* [\wt X]=\wt g_*\wt h_* [\wt X]$. Since $H_4(\wt Y)\cong \Z$, $H_4(\wt B)$ is torsion-free and $\wt g_*[\wt Y]$ is non-trivial, this implies $\wt h_*[\wt X]=[\wt Y]$. Thus by Poincar\'e duality $\wt h$ is an isomorphism on homology of universal coverings in all degrees and hence $h$ is a homotopy equivalence.  

\begin{lemma}
	\label{lem:2.1.1}
	A map $h$ as above exists if and only if $f_*[X]=g_*[Y]\in H_4(B;\Z^w)$.
\end{lemma}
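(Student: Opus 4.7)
The forward direction is immediate: if $h\colon X\to Y$ with $g\circ h\simeq f$ exists, the discussion preceding the lemma shows $h$ is a homotopy equivalence with $h_*[X]=[Y]$, so $f_*[X]=g_*h_*[X]=g_*[Y]$.

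For the converse, the plan is pure obstruction theory. Write $X=K\cup_\alpha D^4$ with $K$ the $3$-skeleton and $\alpha\colon S^3\to K$ the attaching map. Because $g$ is a $3$-equivalence and $\pi_3(B)=\pi_4(B)=0$, its homotopy fibre $F$ is $2$-connected with $\pi_3(F)\cong\pi_3(Y)$. Obstructions to lifting $f|_K$ through $g$ live in $H^{i+1}(K;\pi_i F)$ for $i\ge 3$ and all vanish since $\dim K=3$; hence there is a lift $h_K\colon K\to Y$ with $g\circ h_K\simeq f|_K$. The only remaining obstruction to extending $h_K$ over the $4$-cell is $[h_K\circ\alpha]\in\pi_3(Y)$, and any resulting extension $h$ automatically satisfies $g\circ h\simeq f$, because extensions of $f|_K=g\circ h_K$ over $D^4$ are determined up to an element of $\pi_4(B)=0$. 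The set of lifts $h_K$ of $f|_K$ is a torsor over $[K,F]\cong H^3(K;\pi_3 Y)$ (local coefficients), and the induced action on $[h_K\circ\alpha]$ is by pairing the difference class with the Hurewicz image of $\alpha\in\pi_3(K)$. Hence the genuine, choice-independent obstruction lives in an appropriate quotient of $\pi_3(Y)$.

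The main step, and the main difficulty, is to identify this obstruction with the difference $f_*[X]-g_*[Y]\in H_4(B;\Z^w)$, so that the hypothesis $f_*[X]=g_*[Y]$ forces it to vanish. The tools are \cref{cor:x}, which relates $N^w\alpha\in\pi_3(K)$ to $\lambda_X$ through Whitehead's sequence for $K$; the isomorphism $H_4(\wt B)\cong\Gamma(\pi_2 B)$ from Whitehead's sequence for $\wt B$, valid since $\pi_3 B=\pi_4 B=0$; and the Cartan--Leray spectral sequence $H_p(\pi;H_q(\wt B;\Z))\Rightarrow H_{p+q}(B;\Z^w)$. Since the quadratic $2$-type isomorphism already gives $\wt f_*[\wt X]=\wt g_*[\wt Y]=\lambda\in H_4(\wt B)$, the class $f_*[X]-g_*[Y]$ lies in lower filtration of the spectral sequence; a diagram chase, using \cref{cor:x} and the Hurewicz map $\pi_3(Y)\to H_3(\wt Y)$, identifies these filtration pieces with precisely the quotient of $\pi_3(Y)$ in which the genuine obstruction sits. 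Under the hypothesis $f_*[X]=g_*[Y]$ the obstruction therefore vanishes, a suitable adjustment of $h_K$ within its torsor makes $h_K\circ\alpha$ null-homotopic, and extending over $D^4$ yields the desired $h$.
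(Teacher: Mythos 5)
Your forward direction is fine and matches the paper. Your converse is set up with the same underlying obstruction theory the paper uses (via Hambleton--Kreck): lift $f|_K$ through $g$, note that the fibre is $2$-connected so a lift $h_K$ exists, and then the only obstruction to extending over the top cell is $[h_K\circ\alpha]$ modulo the action of $H^3(K;\pi_3 Y)$, i.e.\ a class in $H^4(X;\pi_3 Y)$. You are also right that choices of $h_K$ matter, which is exactly the small correction the paper makes to Hambleton--Kreck's wording, and your use of $\pi_4(B)=0$ to get $g\circ h\simeq f$ from $g\circ h_K\simeq f|_K$ is sound.

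The genuine gap is the sentence you yourself flag as the main difficulty: identifying the obstruction class in $H^4(X;\pi_3 Y)$ with the image of $f_*[X]-g_*[Y]$ in $H_4(B,Y;\Z^w)$. This is the entire nontrivial content of the lemma, and you do not prove it; you only assert that ``a diagram chase'' through the Cartan--Leray spectral sequence and \cref{cor:x} will do it. That is not yet an argument. The paper (following Hambleton--Kreck) does it by a concrete chain of isomorphisms
\[
H^4(X;\pi_3 Y)\;\cong\;H_0(X;\pi_3 Y\otimes_{\Z\pi}\Z^w)\;\cong\;\pi_3 Y\otimes_{\Z\pi}\Z^w\;\cong\;\pi_4(B,Y)\otimes_{\Z\pi}\Z^w\;\cong\;H_4(B,Y;\Z^w),
\]
the first map being Poincar\'e duality for $X$ and the last the relative Hurewicz theorem for $(\wt B,\wt Y)$, and then checks directly that the obstruction corresponds to the image of $f_*[X]$ under $H_4(B;\Z^w)\to H_4(B,Y;\Z^w)$ (the $g_*[Y]$ term dies in the relative group). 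Your Cartan--Leray route would have to reproduce exactly this identification, including the Poincar\'e duality step and the naturality needed to match the obstruction cocycle with a chain-level representative of $f_*[X]$; without that, the hypothesis $f_*[X]=g_*[Y]$ has not been connected to the vanishing of the obstruction. Until that identification is carried out, the proof is incomplete at its central step.
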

\begin{proof}
	This result is proved by Hambleton and Kreck \cite[Lemma 1.3]{hambleton-kreck} in the oriented case. Since the proof in the non-oriented case works the same we will not give any details here. However, we want to point out the following two things.
	
	Firstly, there is a minor mistake at the beginning of the proof of \cite[Lemma 1.3]{hambleton-kreck}, where the authors construct $B$ by attaching cells of dimension $4$ and higher to $Y$, i.e.\ $g\colon Y\to B$ is the inclusion. Then they say that they want to extend
	\[X^{(3)}\xrightarrow{f^{(3)}}B^{(3)}=Y^{(3)}\subseteq Y\]
	over the $4$-cell of $X$.
	
	In fact, using the vanishing of obstructions in cohomology they can only extend the map $X^{(2)}\to Y$, because they might have to change the given map on the $3$-cells in order to extend over the $4$-cell. But also such an extension $h$ suffices for the proof because the given (trivial) homotopy between $f$ and $g\circ h$ on $X^{(2)}\times I$ extends to $X\times I$ using $\pi_3(B)=\pi_4(B)=0$.
	
	Secondly, to get the non-oriented case, note that Hambleton and Kreck show for $w\equiv 1$ that the obstruction in
	\[H^4(X;\pi_3Y)\cong H_0(X;\pi_3Y\underset{\Z\pi}{\otimes}\Z^w)\cong \pi_3Y\underset{\Z\pi}{\otimes}\Z^w\cong \pi_4(B,Y)\underset{\Z\pi}{\otimes}\Z^w\cong H_4(B,Y;\Z^w)\]
	is given by the image of $f_*[X]$ under $H_4(B;\Z^w)\to H_4(B,Y;\Z^w)$ and therefore vanishes if $f_*[X]=g_*[Y]$. All these isomorphisms continue to hold for non-trivial $w$.
\end{proof}

Let $X$ be a finite Poincar\'e complex with orientation character $w$, finite fundamental group $\pi$, intersection form $\lambda_X$ and with a fixed $3$-equivalence $f\colon X\to B$, where $B$ is $3$-coconnected.  

Let $\lambda$ be a hermitian form on $\pi_2(B)$.
Recall that $\mcS_4^{\PD}(B,w,\lambda)$ denotes the set of $4$-dimensional $B$-polarized Poincar\'e complexes $(Y,g)$ with orientation character w, such that $\lambda$ maps to $\lambda_{Y}$, up to homotopy equivalence over $B$. Assume $(X,f)\in \mcS_4^{\PD}(B,w,\lambda)$. 
By \cref{lem:2.1.1}, sending $(Y,g)$ to $g_*[Y]-f_*[X]$ defines an injection
\[\mcS_4^{\PD}(B,w,\lambda)\to H_4^w(B):=H_4(B;\Z^w).\]

We want to compute the image of this map. Under the isomorphism $H_4(\wt B)\cong \Gamma{\pi_2(B)}$, the element $\wt f_*[\wt X]$ is determined by $\lambda=f^*\lambda_{X}$ by \cref{cor:intform}. Furthermore, recall that $\tr_*[X]=[\wt X]$. Therefore $|\pi|(g_*[Y]-f_*[X])=p_*\tr_*(g_*[Y]-f_*[X])=0$, i.e.\ the above image lies in the torsion subgroup of $H_4^w(B)$.

Write $X$ as $K\cup_\alpha D^4$, where $K$ is a $3$-complex. Since $K$ is a $3$-dimensional complex, $H_3^w(K)$ is a subgroup of a free abelian group and thus torsion-free. Furthermore, $H^1(X)\cong H^1(\pi)\cong \Hom_\Z(\pi,\Z)=0$. Using this, from the exact sequence
\[\xymatrix{0\ar[r]& H_4^w(X)\ar@{=}[d]\ar[r] &H_4^w(X,K)\ar@{=}[d]\ar[r]&H_3^w(K)\ar[r]&H_3^w(X)\ar[d]^\cong_{\PD^{-1}}\\&\Z&\Z&&H^1(X)}\]
we obtain $H_3^w(K)=0$. Therefore, $H_4^w(B)\to H_4^w(B,K)$ is an isomorphism. Since $B$ is $3$-coconnected, $\pi_4(B,K)\to \pi_3(K)$ is an isomorphism. Combining these, we obtain the very useful isomorphism
\begin{equation}
\label{eq:useful}
H_4^w(B)\xrightarrow{\cong}H_4^w(B,K)\xleftarrow{\cong}\pi_4(B,K)\otimes_{\Z\pi}\Z^w\xrightarrow{\cong}\pi_3(K)\otimes_{\Z\pi}\Z^w.\end{equation}
Consider the commutative diagram
\[\xymatrix
{
	H^w_4(X)\ar[r]\ar[d]^{f_*}&H^w_4(X,K)\ar[d]^{f_*}&\pi_4(X,K)\otimes_{\Z\pi}\Z^w\ar[l]\ar[d]^{f_*}\ar[dr]^{\partial}&\\
	H^w_4(B)\ar[r]^\cong&H^w(B,K)&\pi_4(B,K)\otimes_{\Z\pi}\Z^w\ar[l]_\cong\ar[r]^\cong_\partial&\pi_3(K)\otimes_{\Z\pi}\Z^w
}
\]
and view the top cell of $X$ as an element in $\pi_4(X,K)$. Then it has boundary $\alpha$ and its image in $H^w_4(X,K)$ is also the image of $[X]\in H_4^w(X)$. Hence the composition of the horizontal isomorphisms \eqref{eq:useful} maps $f_*[X]$ to $[\alpha]\otimes 1$.

Finally, we use Whitehead sequences for $\wt X$ and $\wt K$:
\[\xymatrix{
	\pi_4(\wt X)\ar[r]^-0&H_4(\wt X)\ar[r]^-{b_4}&\Gamma(\pi_2(X))\ar[r]^-{\widehat{\eta}}&\pi_3 (X)\ar[r]&H_3(\wt X)\ar@{=}[r]&0\\
	&0\ar[u]\ar[r]&\Gamma(\pi_2(K))\ar@{=}[u]\ar[r]^-{\widehat{\eta}}&\pi_3 (K)\ar[u]\ar[r]&H_3(\wt K)\ar[u]\ar[r]&0\\
	&&&\pi_4(\wt X,\wt K)\ar[u]\ar[r]^\cong&H_4(\wt X,\wt K)\ar[r]^-\cong\ar[u]&\Z\pi\\
	&&&\pi_4(\wt X)\ar[u]^0\ar[r]^0&H_4(\wt X)\ar[u]&}\]
The image of the composition $H_4(\wt X)\xrightarrow{b_4}\Gamma(\pi_2 (X))=\Gamma(\pi_2(K))\xrightarrow{\widehat{\eta}}\pi_3(K)$ lies in the kernel of the map to $\pi_3(X)$ and thus inside $\pi_4(\wt X,\wt K)\cong H_4(\wt X,\wt K)\cong \Z\pi$. We see that it fits into a short exact sequence
\[0\to H_4(\wt X)\to H_4(\wt X,\wt K)\to H_3(\wt K)\to 0\]
and we obtain the following commutative diagram of short exact sequences:
\[\xymatrix{
	0\ar[r]&H_4(\wt X)\ar[r]\ar[d]^{b_4}&\Z\pi\ar[r]\ar[d]&H_3(\wt K)\ar[r]\ar@{=}[d]&0\\
	0\ar[r]&\Gamma(\pi_2(K))\ar[r]&\pi_3(K)\ar[r]&H_3(\wt K)\ar[r]&0}\]
The fundamental class $[\wt X]$ maps downward to $\lambda_X\in\Gamma(\pi_2X)=\Gamma(\pi_2K)$ and, by \cref{cor:x}, to the right to $\pm N^w:=\sum_{g\in\pi}w(g)g$. Note that $H_4(\wt X)\cong \Z^w$. By the upper short exact sequence above, $H_3(\wt K)\cong \Z\pi/N^w$.
Tensoring the diagram with $\Z^w$ over $\Z\pi$ we obtain the following diagram of the long exact Tor-sequences:{\small
\[\xymatrix@C=.7pc@R=.8pc{
	&&\Z\ar@{=}[d]\ar[r]^{\cdot|\pi|}&\Z\ar@{=}[d]&&\\
	0\ar[r]&\Tor_1^{\Z\pi}(\Z\pi/N^w,\Z^w)\ar[r]\ar@{=}[d]&\Z^w\otimes_{\Z\pi}\Z^w\ar[d]\ar[r]^-{N^w\otimes \id}&\Z\pi\otimes_{\Z\pi}\Z^w\ar[r]\ar[d]&\Z\pi/N^w\otimes_{\Z\pi}\Z^w\ar[r]\ar@{=}[d]&0\\
	0\ar[r]&\Tor_1^{\Z\pi}(\Z\pi/N^w,\Z^w)\ar[r]&\Gamma(\pi_2(K))\otimes_{\Z\pi}\Z^w\ar[r]&\pi_3(K)\otimes_{\Z\pi}\Z^w\ar[r]&\Z\pi/N^w\otimes_{\Z\pi}\Z^w\ar[r]&0}\]}
The top row shows that $\Tor_1^{\Z\pi}(\Z\pi/N^w,\Z^w)=0$ and $\Z\pi/N^w\otimes_{\Z\pi}\Z^w=\Z/|\pi|\Z$. Using that $\pi_3K\otimes_{\Z\pi}\Z^w\cong H_4^w(B)$ and $\pi_2(B)\cong \pi_2(K)$, the bottom row can be identified with the short exact sequence:
\begin{equation}
	\label{eq:1}
	0\to \Gamma(\pi_2(B))\otimes_{\Z\pi}\Z^w\xrightarrow{i} H_4^w(B)\xrightarrow{q}\Z/|\pi|\to 0
\end{equation}
with $i(\lambda\otimes 1)=|\pi|f_*[X]$ and $q(f_*[X])=1+|\pi|\Z$, exactly as in the oriented case \cite[page 89]{hambleton-kreck}. From here \cref{thm:hk} follows as in the proof of \cite[Theorem~1.1]{hambleton-kreck}.

Now we are nearly ready to prove the following theorem which completes the proof of \cref{thm:teichner}. Recall that $(Y,g)\mapsto g_*[Y]-f_*[X]$ defines an injection $\mcS_4^{\PD}(B,w,f_*\lambda_X)\to H_4^w(B)$ with image inside the torsion subgroup.
\begin{thm}
	\label{thm:mainteichner}
	For all $(Y,g)\in \mcS_4^{\PD}(B,w,\lambda)$ we have $q(g_*[Y]-f_*[X])=0$ and $(Y,g)\mapsto i^{-1}(g_*[Y]-f_*[X])$ defines a bijection
	\[\mcS_4^{\PD}(B,w,\lambda)\leftrightarrow \Tors(\Gamma(\pi_2B)\otimes_{\Z\pi}\Z^w).\]
\end{thm}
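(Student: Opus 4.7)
The plan is to split the proof into two parts: first show the already-established injection $(Y,g)\mapsto g_*[Y]-f_*[X]$ lands in $i(\Tors(\Gamma(\pi_2B)\otimes_{\Z\pi}\Z^w))$, so that the assignment $i^{-1}(g_*[Y]-f_*[X])$ is well defined; then show it is surjective. Injectivity is immediate from \cref{lem:2.1.1}, so the real content is the vanishing of $q$ and the surjectivity.

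For the vanishing $q(g_*[Y]-f_*[X])=0$, I would argue that $q(g_*[Y])=1\in\Z/|\pi|$ for every $(Y,g)\in\mcS_4^{\PD}(B,w,\lambda)$. The sequence \eqref{eq:1} and its quotient map $q$ are intrinsic to $(B,w,\pi)$, so the diagram chase that yielded $q(f_*[X])=1$ applies equally to any CW-decomposition $Y=L\cup_\beta D^4$: the top cell of $Y$ again projects to a generator of $H_3(\wt L)\otimes_{\Z\pi}\Z^w\cong\Z\pi/N^w\otimes_{\Z\pi}\Z^w\cong\Z/|\pi|$. Combined with the fact that $g_*[Y]-f_*[X]$ is $|\pi|$-torsion, this puts $i^{-1}(g_*[Y]-f_*[X])$ in $\Tors(\Gamma(\pi_2B)\otimes_{\Z\pi}\Z^w)$.

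For surjectivity, given $t\in\Tors(\Gamma(\pi_2B)\otimes_{\Z\pi}\Z^w)$ I would construct $Y_t$ by altering the top-cell attaching map of $X=K\cup_\alpha D^4$. Using that multiplication by $N^w$ on any $\Z\pi$-module descends to multiplication by $|\pi|$ after tensoring with $\Z^w$, I would lift $t$ to $\mu\in\ker(N^w\colon\Gamma(\pi_2K)\to\Gamma(\pi_2K))$, and set $Y_t:=K\cup_{\alpha+\widehat\eta(\mu)}D^4$. The class $\widehat\eta(\mu)$ lies in the kernel of the Hurewicz map $\pi_3(K)\to H_3(\wt K)$, so the cellular $\Z\pi$-chain complex of $\wt Y_t$ agrees with that of $\wt X$. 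Using $\pi_3(B)=\pi_4(B)=0$, the map $f|_K$ extends over the new top cell to a $3$-equivalence $g_t\colon Y_t\to B$, and the same top-cell diagram chase as before identifies $g_{t*}[Y_t]-f_*[X]$ with $\widehat\eta(\mu)\otimes 1=i(t)$ under \eqref{eq:useful}. Granted that $Y_t$ is a Poincar\'e complex, \cref{cor:x} and injectivity of $\widehat\eta\colon\Gamma(\pi_2K)\hookrightarrow\pi_3(K)$ (from $H_4(\wt K)=0$) then force $\lambda_{Y_t}=\lambda_X$ via $\widehat\eta(\lambda_{Y_t})=N^w(\alpha+\widehat\eta(\mu))=N^w\alpha=\widehat\eta(\lambda_X)$.

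The main obstacle will be verifying that $Y_t$ is genuinely a Poincar\'e complex with orientation character $w$, not merely a CW-complex with the correct cellular chain complex: Poincar\'e duality is a chain-level statement and the cellular diagonal depends on the attaching map beyond its Hurewicz class. The resolution I would pursue uses \cref{thm:cap}: the Poincar\'e duality pairing on the simply connected $4$-complex $\wt Y_t$ with free $H_2$ is encoded in the image of the fundamental class under $b_4$, and this image equals $b_4[\wt X]$ by the calculation above; the Poincar\'e duality chain equivalence on $\wt X$ then transfers to $\wt Y_t$ through the common cellular chain complex.
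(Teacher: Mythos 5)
Your argument for the vanishing $q(g_*[Y]-f_*[X])=0$ has a genuine gap, and this step is in fact the hardest part of the theorem. The diagram chase applied to a decomposition $Y=L\cup_\beta D^4$ shows that $g_*[Y]$ maps to a \emph{generator} of $\coker(i)$ under the identification $\coker(i)\cong\Z/|\pi|$ that arises from the pair $(Y,L)$. The map $i\colon\Gamma(\pi_2B)\otimes_{\Z\pi}\Z^w\to H_4^w(B)$ is indeed canonical, but the identification of its cokernel with $\Z/|\pi|$ is not: it passes through the choice of top cell and the resulting isomorphism $H_3(\wt L)\cong\Z\pi/N^w$. Two such identifications a priori differ by a unit of $\Z/|\pi|$, so all you obtain is that $q(f_*[X])$ and $q(g_*[Y])$ are both generators, not that they are equal. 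Since $g_*[Y]-f_*[X]$ is always $|\pi|$-torsion, the real question is whether $q$ annihilates $\Tors(H_4^w(B))$, and this is not a formal fact. The paper proves it by constructing in \cref{lem:mainteichner} a $\Z\pi$-homomorphism $\kappa\colon\Gamma(\pi_2B)\otimes_{\Z\pi}\Z^w\to\Z$ with $\kappa(\lambda\otimes 1)=1$, using a central involution, the Lefschetz fixed-point theorem, Steenrod squares and transfers; this yields $\Tors(H_4^w(B))\subseteq\im(i)$ when $w\equiv 1$ or the $2$-Sylow subgroup has order at least $4$, and the remaining case $\pi=Q\rtimes\Z/2$ with $|Q|$ odd and $w$ the projection is handled separately via the orientation double cover and the oriented statement. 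None of this machinery appears in your proposal, and I do not see how to reach the conclusion by the intrinsicness of $q$ alone.

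Your surjectivity outline is close to the paper's, which also re-attaches the top cell of $X=K\cup_\alpha D^4$ along $\alpha+\widehat\eta(\widehat\beta)$ for a lift $\widehat\beta\in\Gamma(\pi_2B)\subseteq\pi_3(K)$ of the given torsion class. Your insistence on choosing the lift $\mu$ in $\ker(N^w)$ is the right instinct — by \cref{cor:x} one has $\lambda_{Y_t}=\lambda_X+N^w\mu$, so $N^w\mu=0$ is exactly what keeps $(Y_t,g_t)$ in $\mcS_4^{\PD}(B,w,\lambda)$ rather than merely $\mcS_4^{\PD}(B,w)$ — but the observation that $N^w$ descends to $|\pi|$ after tensoring with $\Z^w$ does not by itself produce such a lift; that needs its own argument. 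Your concern about checking Poincar\'e duality for $Y_t$ is legitimate and the proposed resolution via \cref{thm:cap} is in the right spirit; the paper is terse here, deferring to Hambleton--Kreck.
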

Before proving \cref{thm:mainteichner} we will first show how it implies \cref{thm:main}.
\begin{proof}[Proof of \cref{thm:main}]
It remains to show that for a given closed, connected $4$-manifold $M$ with fundamental group $\pi$ and orientation character $w$ the element $c_*[M]\in H_4(\pi;\Z^w)$ is determined by the stable quadratic $2$-type of $M$. Let $B$ denote the Postnikov $2$-type of $M$ and consider the sequence $M\xrightarrow{f} B\xrightarrow{c} B\pi$. 
Considering the following diagram and following the construction of \eqref{eq:1}, we see that the map $i$ in \eqref{eq:1} can be identified with the canonical map $H_4(\wt B)\otimes_{\Z\pi}\Z^w\to H_4^w(B)$.
\[\xymatrix{
	&&&0&0&\\
	&0\ar[r]&\Gamma(\pi_2(\wt K))\ar[r]^-{\widehat{\eta}}&\pi_3 (\wt K)\ar[u]\ar[r]&H_3(\wt K)\ar[u]\ar[r]&0\\
	&&&\pi_4(\wt B,\wt K)\ar[u]_\cong\ar[r]^\cong&H_4(\wt B,\wt K)\ar[u]&\\
	&&&0\ar[u]\ar[r]&H_4(\wt B)\ar[u]&\\
&&&&0\ar[u]&
}\]
Now given another closed, connected $4$-manifold $N$ with orientation character $w$ and a $3$-equivalence $g\colon N\to B$ such that $g^*\lambda_N=f^*\lambda_M$, then $f_*[M]-g_*[N]$ is contained in $\Tors(\Gamma(\pi_2(B))\otimes_{\Z\pi}\Z^w)\cong \Tors(H_4(\wt B)\otimes_{\Z\pi}\Z^w)$ by \cref{thm:mainteichner}. Since the composition $\wt B\to B\xrightarrow{c} B\pi$ is trivial, we thus have $c_*f_*[M]=c_*g_*[N]\in H_4^w(B\pi)$ as claimed.
\end{proof}
For the proof of \cref{thm:mainteichner} we need the following lemma.
\begin{lemma}
	\label{lem:mainteichner}
	If $w\equiv 1$ or if the $2$-Sylow subgroup of $\pi$ has order bigger than 2, there exists a homomorphism
	\[\kappa\colon\Gamma(\pi_2B)\otimes_{\Z\pi}\Z^w\to \Z~\text{with}~\lambda\otimes 1\mapsto 1.\]
\end{lemma}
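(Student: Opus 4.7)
The plan is to dualize via \cref{thm:cap} and exhibit a $w$-twisted equivariant symmetric bilinear form on $\pi_2 B$ that pairs with $\lambda$ to give exactly $1$. Recall that $\Gamma(\pi_2 B)$ is canonically isomorphic to $\Sym_\Z(H^2(\wt X), H_2(\wt X))$, with $\lambda$ corresponding to the Poincar\'e duality isomorphism $-\cap[\wt X]$. Dually, a homomorphism $\Gamma(\pi_2 B)\otimes_{\Z\pi}\Z^w\to\Z$ is equivalent to a symmetric bilinear form $B$ on $\pi_2 B$, $\Z$-valued on the diagonal and $\tfrac{1}{2}\Z$-valued off the diagonal, satisfying the $w$-twisted equivariance $B(ga,gb)=w(g)B(a,b)$. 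Under this identification $\kappa(\lambda\otimes 1)$ becomes a trace of $B$ paired against the matrix of $-\cap[\wt X]$, and the problem reduces to finding such a $B$ with this trace equal to $1$.

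In the oriented case $w\equiv 1$, a natural candidate is built from the equivariant intersection form itself, $B(a,b):=\epsilon(\lambda_X(a,b))$, where $\epsilon\colon\Z\pi\to\Z$ is the augmentation. The hermitian identity $\lambda_X(ga,gb)=g\lambda_X(a,b)g^{-1}$ combined with conjugation-invariance of $\epsilon$ gives $\pi$-invariance, and $\epsilon\circ\overline{(\cdot)}=\epsilon$ in the oriented case gives symmetry, so this $B$ defines a homomorphism $\kappa$. To arrange $\kappa(\lambda\otimes 1) = 1$ one may need to modify $B$ by a rank-one invariant form adapted to a cellular structure of $X$, exploiting that $\lambda$ is primitive --- it is a unimodular symmetric isomorphism $H^2(\wt X)\to H_2(\wt X)$ --- to extract an invariant bilinear form pairing with it to $\pm 1$. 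This is essentially the construction in \cite[proof of Thm.~1.1]{hambleton-kreck}.

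For the non-oriented case $w\not\equiv 1$ with $|2\text{-Sylow}(\pi)|>2$, I would pass to the orientable double cover $\hat X\to X$ corresponding to $\pi_0:=\ker w$ of index $2$, apply the oriented construction to produce $\hat\kappa\colon\Gamma(\pi_2 B)\otimes_{\Z\pi_0}\Z\to\Z$, and descend to $\kappa$ on $\Gamma(\pi_2 B)\otimes_{\Z\pi}\Z^w$ by a $w$-twisted averaging or transfer over $\pi/\pi_0\cong\Z/2$. The $2$-Sylow hypothesis is precisely what makes the descent integer-valued with the correct normalization. In the excluded case $|2\text{-Sylow}|=2$ with $w\neq 1$, there is necessarily an involution $g$ with $w(g)=-1$, and the calculation in \cref{prop:2.2.2ii} illustrated by $X=\RP^4\#\CP^2$ shows that $\lambda\otimes 1$ can become $2$-torsion (indeed zero in that example) in $\Gamma(\pi_2 B)\otimes_{\Z\pi}\Z^w$, so no such $\kappa$ can exist.

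The main obstacle is the exact normalization $\kappa(\lambda\otimes 1)=1$, rather than merely producing a functional that is nonzero on $\lambda\otimes 1$. Naive traces against an invariant form typically give values related to $\operatorname{rank}(\pi_2 B)$ or $\chi(X)$; cutting these down to $1$ requires exploiting the primitivity of $\lambda$ as a unimodular Poincar\'e duality isomorphism in the oriented case, and in the non-oriented case additionally managing the $2$-primary obstruction via the $2$-Sylow hypothesis --- a delicate point, since the same mechanism that yields $\kappa$ in the good case witnesses its nonexistence in the excluded bad case.
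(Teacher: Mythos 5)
Your plan does not close the gap you yourself identify as ``the main obstacle'': you never actually produce a functional with value exactly $1$ on $\lambda\otimes 1$. The paper's proof builds three explicit $\Z\pi$-maps $\kappa_1,\kappa_2,\kappa_3\colon\Gamma(\pi_2X)\to\Z^w$ with $\kappa_1(\lambda_X)=|\pi|$ (via $N^w$ and primitivity of $\lambda_X$ in the invariants), $\kappa_2(\lambda_X)=\operatorname{rank} H_2(\wt X)$ (via $\trace\circ\kappa'$), and $\kappa_3(\lambda_X)$ \emph{odd}, and then uses the Euler-characteristic identity $\operatorname{rank} H_2(\wt X)=|\pi|\chi(X)-2$ to produce an integer linear combination $\kappa_3+n\kappa_2-n\chi(X)\kappa_1$ that hits $1$. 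Your augmentation-based form is essentially a version of $\kappa_1$/$\kappa_2$ territory and gives values like $|\pi|$ or the rank, as you acknowledge; the sentence ``one may need to modify $B$ by a rank-one invariant form adapted to a cellular structure'' is not a construction and is precisely where the proof has to happen.

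The idea you are missing is the one that makes $\kappa_3$ odd, and it is where the hypothesis is used: under ``$w\equiv 1$ or $|2\text{-Sylow}(\pi)|>2$'' one can (after transferring down to a $2$-Sylow subgroup $G$, which multiplies $\lambda\otimes 1$ by the odd index $[\pi:G]$) find a \emph{central} involution $\tau\in G$ with $w(\tau)=1$ (\cref{lem:2.3.2}). Then $\trace\circ\tau_*\circ\kappa'$ is $\Z\pi$-linear, takes the value $-2$ on $\lambda_X$ by the Lefschetz fixed-point formula for the free action of $\tau$ on $\wt X$ (\cref{lem:2.3.3}), and has image in $2\Z$ by an evenness computation using the transfer and $\Sq^2$ on the $\tau$-double cover (\cref{lem:even}, \cref{cor:even}); dividing by $-2$ gives $\kappa_3$ with $\kappa_3(\lambda_X)=1$. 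Your proposed alternative --- descending a functional from the orientation double cover by ``$w$-twisted averaging'' --- is not what the paper does and has the expected $2$-divisibility problem: averaging over $\pi/\ker w\cong\Z/2$ introduces a factor of $2$ that you have no mechanism to cancel, which is exactly the obstruction the central-involution/Lefschetz argument is designed to beat. So while your intuition that the $2$-primary issue is the crux is correct, the proposal as written has a genuine gap and would not yield the lemma.
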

We will first prove \cref{thm:mainteichner} assuming \cref{lem:mainteichner}, and then we will prove \cref{lem:mainteichner}.
\begin{proof}[Proof of \cref{thm:mainteichner}]
	We will first show that $q(g_*[Y]-f_*[X])=0$.
	
	In the case that $w\equiv 1$ or the $2$-Sylow subgroup of $\pi$ has order more than $2$, we will show that $i(\Tors(\Z^w\otimes_{\Z\pi}\Gamma(\pi_2B)))=\Tors(H_4^w(B))$. This proves $q(g_*[Y]-f_*[X])=0$ in this case. The image of $\Tors(\Z^w\otimes_{\Z\pi}\Gamma(\pi_2B))$ is obviously contained in $\Tors(H_4^w(B))$ and we have to show the other inclusion.
	
	Let $x\in \Tors(H_4^w(B))$ be given. Because $H_4(\wt{B})\cong \Gamma(\pi_2B)$ is torsion free, we see that $|\pi|x=p_*\tr_*(x)=0$. From \eqref{eq:1} and $q(f_*[X])=1\in\Z/|\pi|$ we see that there exists $k\in\bbN$ and $y\in \Gamma(\pi_2 B)\otimes_{\Z\pi}\Z^w$ such that $x+kf_*[X]=i(y)$. It follows that
	\[|\pi|i(y)=|\pi|x+k|\pi|f_*[X]=ki(\lambda\otimes 1)\]
	Thus, $|\pi|y=k(\lambda\otimes 1)$ and by \cref{lem:mainteichner} we have $|\pi|\kappa(y)=k\kappa(\lambda\otimes 1)=k$. Therefore, $|\pi|$ divides $k$ and
	\[x=i(y-\frac{k}{|\pi|}(\lambda\otimes 1))\in \im(i).\]
	
	We now consider the case $\pi=Q\rtimes\Z/2$ with $|Q|$ odd and $w=p_2$ the projection onto $\Z/2\cong\{\pm 1\}$. Since $f_*[X]\in H_4^w(B)$ maps to a generator of $\Z/|\pi|\cong \Z/|Q|\oplus\Z/2$ under $q$ there is $x\in \Z/|Q|$ with $q(f_*[X])=(x,1+2\Z)\in \Z/|Q|\oplus \Z/2$. For any $(Y,g)\in \mcS_4^{\PD}(B,w,\lambda)$ there is in the same way $y\in \Z/|Q|$ with $q(g_*[Y])=(y,1+2\Z)$ and hence $q(g_*[Y]-f_*[X])=(y-x,0)$.
	
	Taking double coverings with respect to the subgroup $Q\leq\pi$ we get a commutative square
	\[\xymatrix{
		\widehat{X}\ar[r]^{\widehat{f}}\ar[d]&\widehat{B}\ar[d]^p\\
		X\ar[r]^f&B}\]
	where $p_*\widehat{f}_*[\widehat{X}]=2f_*[X]$. So we may conclude from the following commutative diagram that $\widehat{q}(g_*[\widehat{Y}])=y$.
		\[\xymatrix{
			0\ar[r]&\Gamma(\pi_2\widehat{B})\otimes_{\Z Q}\Z\ar[r]^-{\widehat i}\ar[d]&H_4(\widehat{B})\ar[d]^{p_*}\ar[r]^-{\widehat{q}}&\Z/|Q|\ar[r]\ar[d]^{\cdot 2}&0\\
			0\ar[r]&\Gamma(\pi_2B)\otimes_{\Z\pi}\Z^w\ar[r]^-i&H_4^w(B)\ar[r]^-q&\Z/|\pi|\ar[r]&0}\]
	Since $\widehat{X}$ is an orientable Poincar\'e complex we already know that \cref{thm:mainteichner} holds for $\widehat{X}$. Therefore, $0=\widehat{q}(\widehat{g}_*[\widehat{Y}]-\widehat{f}_*[\widehat{X}])=y-x\in \Z/|Q|$ and thus also $q(g_*[Y]-f_*[X])=0$.
	
	By the above, the map $\mcS_4^{\PD}(B,w,\lambda)\rightarrow \Tors(\Gamma(\pi_2B)\otimes_{\Z\pi}\Z^w)$ is well defined and we already now that it is injective. Showing surjectivity is easy since we can use \cite[page 89]{hambleton-kreck} to reattach the top cell of $X$ as follows. Recall that $X=K\cup_\alpha D^4$. For a given element $\beta\in \Tors(\Gamma(\pi_2B)\otimes_{\Z\pi}\Z^w)$ pick a preimage $\widehat\beta\in \Gamma(\pi_2B)\subseteq \pi_3(K)$ and attach a $4$-cell to $K$ along a representative of $\widehat\beta+\alpha$. The resulting complex $X_\beta$ has the same intersection form as $X$ and also $H_3(\wt{X_\beta})=0$ because $\widehat\beta$ maps trivially to $H_3(\wt K)$. As in \cite{hambleton-kreck}, the map $f|_K$ extends to a $3$-equivalence $f_\beta\colon X_\beta\to B$ such that $f_{\beta*}[X_\beta]=f_*[X]+\beta$.
\end{proof}
For the proof of \cref{lem:mainteichner} we need the following constructions of maps from $\Gamma(\pi_2(X))=\Gamma(\pi_2(B))$ to $\Z^w$.
Consider the isomorphism
\[\zeta^{-1}\colon \Gamma(\pi_2(X))\to \Sym_\Z(H^2(\wt X),H_2(\wt X))\]
from \cref{thm:cap}.
Precomposing with the inverse of $\PD\colon H^2(\wt X)\xrightarrow{-\cap[\wt X]} H_2(\wt X)$, we get a map
\begin{equation}
\label{eq:kappa}\kappa'\colon \Gamma(\pi_2(X))\to\Hom_\Z(H_2(\wt X),H_2(\wt X)).\end{equation}
The construction of $\kappa'$ and the following two lemmas below are due to Stefan Bauer \cite{bauer}.
\begin{lemma}
	If we let $\pi$ act on $\Hom_\Z(H_2(\wt X),H_2(\wt X))$ by $g\phi=w(g)(g_*\circ\phi\circ g_*^{-1})$, then $\kappa'$ is equivariant. In particular,
	\[\trace\circ \kappa'\colon \Gamma(\pi_2(X))\to \Z^w\]
	is equivariant.
\end{lemma}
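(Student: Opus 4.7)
The plan is to unwind the definition of $\kappa'$ piece by piece, tracking how each ingredient transforms under the $\pi$-action, and then combine. There are three pieces: the isomorphism $\zeta$ of \cref{thm:cap}, Poincar\'e duality $\mathrm{PD}\colon H^2(\wt X)\to H_2(\wt X)$, $\alpha\mapsto \alpha\cap[\wt X]$, and the induced action of $\pi$ on each side.

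First I would check naturality of $\zeta$ with respect to self-homotopy equivalences $h\colon \wt X\to \wt X$ (the deck transformations). Applied to $\zeta(-\cap z)=b_4(z)$, naturality forces the companion map on $\Sym_\Z(H^2(\wt X),H_2(\wt X))$ to be $\phi\mapsto h_*\circ \phi\circ h^*$, since for any $\alpha'\in H^2(\wt X)$ one computes $\alpha'\cap h_* z = h_*(h^*\alpha'\cap z)$. Converting the contravariant $h^*$ into the standard left $\pi$-action $g\cdot\alpha:=(g^{-1})^*\alpha$ on $H^2(\wt X)$, the translated action on $\Sym_\Z(H^2(\wt X),H_2(\wt X))$ is $g\cdot \phi = g_*\circ \phi\circ g^*$, and $\zeta$ is equivariant for it.

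Next I would compute how $\mathrm{PD}$ twists the group actions. The key input is $g_*[\wt X]=w(g)[\wt X]$, which follows from the definition of the orientation character. Then
\begin{equation*}
g_*\mathrm{PD}(\alpha)=g_*(\alpha\cap[\wt X])=(g\cdot \alpha)\cap g_*[\wt X]=w(g)\,\mathrm{PD}(g\cdot\alpha).
\end{equation*}
Replacing $g$ by $g^{-1}$ and rearranging (using $w(g^{-1})=w(g)$) yields
\begin{equation*}
g^*\circ \mathrm{PD}^{-1}=w(g)\,\mathrm{PD}^{-1}\circ g_*^{-1}.
\end{equation*}

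Finally I would assemble. For $\gamma\in\Gamma(\pi_2 X)$, by definition $\kappa'(\gamma)=\zeta^{-1}(\gamma)\circ \mathrm{PD}^{-1}$, and the two steps above give
\begin{align*}
\kappa'(g\cdot\gamma)
&=\zeta^{-1}(g\cdot\gamma)\circ \mathrm{PD}^{-1}
=\bigl(g_*\circ \zeta^{-1}(\gamma)\circ g^*\bigr)\circ \mathrm{PD}^{-1}\\
&=g_*\circ \zeta^{-1}(\gamma)\circ \bigl(g^*\circ \mathrm{PD}^{-1}\bigr)
=w(g)\,g_*\circ \zeta^{-1}(\gamma)\circ \mathrm{PD}^{-1}\circ g_*^{-1}\\
&=w(g)\,g_*\circ \kappa'(\gamma)\circ g_*^{-1},
\end{align*}
which is exactly the action $g\cdot \kappa'(\gamma)$ on $\Hom_\Z(H_2(\wt X),H_2(\wt X))$ specified in the statement. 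For the last assertion I would simply note that $\trace(g_*\circ \phi\circ g_*^{-1})=\trace(\phi)$ by conjugation invariance, so $\trace(g\cdot \phi)=w(g)\trace(\phi)$, which is precisely the $\pi$-action on $\Z^w$; hence $\trace\circ \kappa'$ is equivariant.

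The only mildly delicate point is bookkeeping of the twist $w(g)$: one must consistently convert $g^*$ into the left action on $H^2(\wt X)$ and propagate the sign arising from $g_*[\wt X]=w(g)[\wt X]$ through cap product. Everything else is formal from the naturality of $\zeta$ and elementary properties of the trace.
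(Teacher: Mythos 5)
Your proof is correct and follows essentially the same route as the paper: both decompose $\kappa'$ as $\zeta^{-1}$ post-composed with $\PD^{-1}$, appeal to naturality of Whitehead's isomorphism $\zeta$ for the action $g\psi = g_*\circ\psi\circ g^*$, and derive the relation $g^*\circ\PD^{-1}=w(g)\,\PD^{-1}\circ g_*^{-1}$ from $g_*[\wt X]=w(g)[\wt X]$; the final trace assertion follows from conjugation invariance as you say. Your write-up is somewhat more explicit about the sign bookkeeping, but there is no substantive difference.
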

\begin{proof}
	If we let $\pi$ act on $\Sym(H^2(\wt X),H_2(\wt X))$ by $g\psi=g_*\circ \psi\circ g^*$, then by naturality $\zeta$ is equivariant.
	We have
	\[g_*(g^*\alpha\cap [\wt X])=\alpha\cap g_*[\wt X]=w(g)(\alpha\cap[\wt X])\]
	and thus $g^*\circ \PD^{-1}=w(g)(\PD^{-1}\circ g^{-1}_*)$. This proves the lemma since $\kappa'(x)=\zeta(x)\circ \PD^{-1}$.
\end{proof}
\begin{lemma}
	\label{lem:kappa'}
	When we view $\lambda_X$ as an element of $\Gamma(\pi_2(X))$ as in \cref{cor:intform2}, then
	\[\kappa'(\lambda_X)=\id_{H_2(\wt X)}.\]
\end{lemma}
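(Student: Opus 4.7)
The proof will be a direct unwinding of definitions. The key input is the formula $\zeta(-\cap z) = b_4(z)$ from \cref{thm:cap} together with \cref{cor:intform2}, which declares
\[\lambda_X = b_4([\widetilde X]) = \zeta\bigl(-\cap[\widetilde X]\bigr) \in \Gamma(\pi_2(X)).\]
Since $\zeta$ is an isomorphism, applying $\zeta^{-1}$ to both sides yields
\[\zeta^{-1}(\lambda_X) = -\cap[\widetilde X] \in \Sym_\Z(H^2(\widetilde X), H_2(\widetilde X)).\]

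The plan is then to observe that the homomorphism $-\cap[\widetilde X] \colon H^2(\widetilde X) \to H_2(\widetilde X)$ appearing on the right is, by definition, the Poincar\'e duality map $\PD$ whose inverse is precomposed in the definition \eqref{eq:kappa} of $\kappa'$. Plugging into that definition,
\[\kappa'(\lambda_X) = \zeta^{-1}(\lambda_X) \circ \PD^{-1} = \PD \circ \PD^{-1} = \id_{H_2(\widetilde X)},\]
which is the claim. There is essentially no obstacle here; the entire content of the lemma is the compatibility statement $\zeta(-\cap[\widetilde X]) = b_4([\widetilde X])$ from \cref{thm:cap}, which has already been used to set up \cref{cor:intform2}. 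The only small bookkeeping point is to check that $H_2(\widetilde X)$ is free abelian so that \cref{thm:cap} applies to $Y = \widetilde X$; this follows immediately from Poincar\'e duality $H_2(\widetilde X) \cong \Hom_\Z(H_2(\widetilde X), \Z)$, which was already noted just before \cref{cor:intform2}.
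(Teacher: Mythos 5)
Your proof is correct and is essentially the paper's own argument, just spelled out in more detail: the paper's proof simply says that $\lambda_X$ is the image of $-\cap[\wt X]$ under $\zeta$, so precomposing $\zeta^{-1}(\lambda_X) = \PD$ with $\PD^{-1}$ gives the identity. Your bookkeeping remark that $H_2(\wt X)$ is free (needed for \cref{thm:cap}) is a sensible inclusion and matches what the paper records just before \cref{cor:intform2}.
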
	
\begin{proof}
	Recall that $\lambda_X$ is the image of $-\cap [\wt X]$ under $\zeta$. Thus, precomposing with the inverse of $\PD=-\cap[\wt X]$ gives the identity on $H_2(\wt X)$.
\end{proof}

A central involution in a group $G$ is an element of order 2 in the center of $G$.
\begin{lemma}
	\label{lem:2.3.2}
	Let $G$ be a $2$-group, $w\colon G\to \Z/2$. If $w\equiv 1$ or $|G|>2$, there exists a central involution $\tau\in G$ with $w(\tau)=1\in\{\pm 1\}$.
\end{lemma}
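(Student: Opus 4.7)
The strategy is to reduce the statement to the classical fact that every nontrivial normal subgroup of a finite $2$-group meets the centre nontrivially, applied to $N:=\ker(w)\trianglelefteq G$.

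First I would verify that the hypotheses guarantee $\ker(w)$ is a nontrivial (normal) $2$-subgroup of $G$. If $w\equiv 1$, then $\ker(w)=G$, which we may take to be nontrivial (otherwise the conclusion is vacuous, since a trivial $G$ has no involution at all). If $w$ is surjective, then $|\ker(w)|=|G|/2$, which is at least $2$ precisely because $|G|>2$. The case $|G|=2$ with surjective $w$ excluded by the hypothesis is exactly the one in which the unique involution of $G$ is mapped to $-1$, so the dichotomy in the hypothesis is sharp.

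Next I would invoke the standard centre-intersection argument for $p$-groups: $G$ acts on $\ker(w)$ by conjugation, the fixed-point set is $\ker(w)\cap Z(G)$, and every non-singleton orbit has cardinality a positive power of $2$. Hence
\[
|\ker(w)\cap Z(G)|\;\equiv\;|\ker(w)|\;\equiv\;0\pmod 2,
\]
so $\ker(w)\cap Z(G)$ is a nontrivial finite $2$-group and contains an element $\tau$ of order~$2$. By construction $\tau\in Z(G)$ and $\tau\in\ker(w)$, so $\tau$ is a central involution with $w(\tau)=+1$, as required.

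There is essentially no obstacle beyond this bookkeeping: the entire content is one standard $p$-group fact, and the hypothesis ``$w\equiv 1$ or $|G|>2$'' is tailored exactly to ensure that the normal subgroup $\ker(w)$ to which that fact is applied is nontrivial.
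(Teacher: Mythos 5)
Your proof is correct and takes a genuinely different route from the paper's. The paper first chooses an arbitrary central involution $\tau$ of $G$ (using only that a nontrivial $2$-group has nontrivial centre); if $w(\tau)=-1$, it observes that $w$ splits via $\langle\tau\rangle$ and that, because $\tau$ is central, $G=\langle\tau\rangle\times\ker(w)$ as an internal direct product; since $|G|>2$, the factor $\ker(w)$ is a nontrivial $2$-group with its own central involution $\tau'$, which remains central in the direct product $G$. You instead apply, in one shot, the slightly stronger classical fact that every nontrivial normal subgroup of a finite $p$-group meets the centre nontrivially, directly to $N=\ker(w)$. This skips both the case split and the direct-product decomposition, at the price of invoking the stronger $p$-group statement; both arguments are standard and equally short. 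You also correctly flag that the lemma as literally stated requires $G$ to be nontrivial when $w\equiv 1$; the paper's proof makes the same implicit assumption, and in the application it is harmless since $G$ is there a Sylow $2$-subgroup and a trivial one makes $|\pi|$ odd, in which case the odd-valued functional $\kappa_3$ is not needed.
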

\begin{proof}
	Since $G$ is a $2$-group there exists a central involution. We are done if $w(\tau)=1$. Otherwise the map $w$ splits and since $\tau$ is central, we have $G=\langle \tau\rangle\times \ker(w)$. Since $|G|>2$ there again exists a central involution $\tau'\in \ker (w)$ and this is still central in $G=\langle \tau\rangle\times \ker(w)$.
\end{proof}

Let $\tau_*\colon \Hom_Z(H_2(\wt X),H_2(\wt X))\to \Hom_Z(H_2(\wt X),H_2(\wt X))$ be given by precomposition with the action of $\tau$ on $\wt X$.
\begin{lemma}
	\label{lem:2.3.3}
	The map
	\[\trace\circ \tau_*\circ \kappa'\colon \Gamma(\pi_2(X))\to \Z^w\]
	is equivariant and the element $\lambda_X$ is mapped to $-2$. 
\end{lemma}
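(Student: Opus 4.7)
The lemma bundles two assertions: equivariance of $\trace\circ\tau_*\circ\kappa'$ and the value $-2$ at $\lambda_X$. My plan is to handle these separately: the equivariance is a formal consequence of centrality of $\tau$ together with cyclicity of the trace, while the value will drop out of the Lefschetz fixed-point theorem applied to the free action of $\tau$ on $\wt X$, using the Poincar\'e-duality structure of the simply connected complex $\wt X$.

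For equivariance, recall that $\kappa'$ has already been shown to be equivariant with the stated action on $\Hom_\Z(H_2(\wt X),H_2(\wt X))$, so it suffices to check that $\trace\circ\tau_*$ is equivariant. Writing $T$ for the action of $\tau$ on $H_2(\wt X)$, so that $\tau_*(\phi)=\phi\circ T$ by definition, I compute for $g\in\pi$ and $\phi\in\Hom_\Z(H_2(\wt X),H_2(\wt X))$:
\[
\tau_*(g\cdot\phi)=w(g)\,g_*\circ\phi\circ g_*^{-1}\circ T.
\]
Since $\tau$ is central in $\pi$, $T$ commutes with every $g_*$, so the expression rearranges to $w(g)\,g_*\circ\phi\circ T\circ g_*^{-1}$. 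Cyclicity of the trace then gives
\[
\trace(\tau_*(g\cdot\phi))=w(g)\trace(\phi\circ T)=w(g)\trace(\tau_*\phi),
\]
which is precisely the $\Z^w$-action, establishing equivariance.

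For the value at $\lambda_X$, \cref{lem:kappa'} gives $\kappa'(\lambda_X)=\id_{H_2(\wt X)}$, whence $\tau_*\kappa'(\lambda_X)=T$ and I must show $\trace(T)=-2$. I apply the Lefschetz fixed-point theorem to the homeomorphism $\tau\colon\wt X\to\wt X$ of the finite CW complex $\wt X$. The deck-transformation $\pi$-action on $\wt X$ is free and $\tau\neq 1$, so $\tau$ acts without fixed points and $L(\tau)=0$. Now $\wt X$ is a simply connected $4$-dimensional Poincar\'e complex, so the rational homology is: $H_0(\wt X;\Q)=\Q$ with $\tau$ trivial (contributing $1$); $H_1(\wt X;\Q)=0$; $H_3(\wt X;\Q)\cong H^1(\wt X;\Q)=0$ by Poincar\'e duality and simple connectivity; and $H_4(\wt X;\Q)=\Q$ with $\tau_*[\wt X]=w(\tau)[\wt X]=[\wt X]$ (contributing $1$ since $w(\tau)=+1$ by the choice in \cref{lem:2.3.2}). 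The Lefschetz formula becomes
\[
0=L(\tau)=1+\trace(T)+1,
\]
yielding $\trace(T)=-2$, as required.

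The only nontrivial ingredient is the Lefschetz computation, and within it the key input is the freeness of the $\pi$-action on $\wt X$ together with the sign $w(\tau)=+1$ supplied by \cref{lem:2.3.2}; the equivariance is purely formal once centrality of $\tau$ is in hand.
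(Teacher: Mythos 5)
Your proof is correct and follows essentially the same route as the paper: equivariance via centrality of $\tau$ (the paper factors this as ``$\tau_*$ is equivariant, and $\trace$ and $\kappa'$ are too,'' where you fold in the trace directly via cyclicity, but the content is identical), and the Lefschetz fixed-point theorem applied to the free action of $\tau$ on $\wt X$, using $H_1(\wt X)=H_3(\wt X)=0$ and $w(\tau)=1$ to get $0 = 1 + \trace(T) + 1$. No meaningful differences.
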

\begin{proof}
	Since $\kappa'$ and the trace are equivariant, we have to show that $\tau_*$ is equivariant. This is the case, because the group element $\tau$ is central.
	Since $\kappa'(\lambda_X)=\id_{H_2(\wt X)}$ by \cref{lem:kappa'}, we have $\tau_*(\kappa'(\lambda_X))=\tau\colon H_2(\wt X)\to H_2(\wt X; \Z)$. Since $w(\tau)=1$ and $\tau$ acts fixed point free on $\wt X$, the Lefschetz formula yields
	\[0=\trace(\tau_{H_0(\wt X)})+\trace(\tau_{H_4(\wt X)})+\trace(\tau_{H_2(\wt X)})=2+\trace(\tau_*(\kappa'(\lambda_X)))\]
	and therefore $\trace(\tau_*(\kappa'(\lambda_X)))=-2$.
\end{proof}
We want to show that the image of $\trace\circ \tau_*\circ\kappa'$ is always even. For this we need the following lemma. This is well-known and for example follows from \cite[VII~Theorem~7.4]{bredon-trans}. For the reader's convenience we give a proof.
\begin{lemma}
	\label{lem:even}
	Let $\tau\in\pi$ have order 2. Then the symmetric bilinear form
	\[H^2(\wt{X};\Z/2)\otimes H^2(\wt{X};\Z/2)\to \Z/2,\quad x\otimes y\mapsto \langle \tau x\cup y,[\wt{X}]\rangle\]
	is even, i.e.\ $\langle \tau x, x\rangle=0$ for all $x\in H^2(\wt X;\Z/2)$.
\end{lemma}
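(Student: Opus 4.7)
The plan is to push the evaluation down to the double cover $Y := \wt X/\langle\tau\rangle$ via the transfer and then collapse the pull-back using the compatibility of the transfer with cup squares and Steenrod operations.

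Since $\tau\in\pi$ acts freely on $\wt X$, the projection $p\colon \wt X \to Y$ is a $2$-fold covering and $Y$ inherits the structure of a finite Poincar\'e $4$-complex with $\pi_1 Y \cong \Z/2$. \cref{lem:transfer} then applies with $U = 1 \lhd \pi_1 Y$. On the chain level the transfer sends a simplex of $Y$ to the sum of its two lifts, so $\tr_*[Y] = [\wt X]$ in $H_4(\wt X;\Z/2)$. By the Kronecker identity in part~(4) of \cref{lem:transfer},
\[
\langle \tau x \cup x,\, [\wt X]\rangle \;=\; \langle \tau x \cup x,\, \tr_*[Y]\rangle \;=\; \langle \tr^*(\tau x \cup x),\, [Y]\rangle,
\]
so it is enough to prove $\tr^*(\tau x\cup x) = 0 \in H^4(Y;\Z/2)$.

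For this, part~(2) of \cref{lem:transfer} gives $p^*\tr^* = 1 + \tau$ on $H^*(\wt X;\Z/2)$, so mod~$2$ we have $\tau x = p^*\tr^*(x) + x$ and hence
\[
\tau x \cup x \;=\; p^*\tr^*(x)\cup x + x\cup x.
\]
Applying $\tr^*$: the module property in part~(3) converts the first summand to $\tr^*(x)\cup\tr^*(x) = \Sq^2(\tr^*x)$, using that cup-squaring a mod-$2$ class of degree~$2$ is $\Sq^2$. The Steenrod-compatibility in part~(4) converts the second summand to $\tr^*(\Sq^2 x) = \Sq^2(\tr^*x)$. The two contributions agree and so cancel modulo~$2$, giving $\tr^*(\tau x \cup x) = 0$ as required.

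The only delicate point is the identification $\tr_*[Y]=[\wt X]$, which in the non-orientable case $w(\tau)=-1$ might seem to require some care with fundamental classes; but with $\Z/2$ coefficients the orientation characters are invisible and the chain-level description above settles it. Beyond that, the argument is a short formal manipulation with \cref{lem:transfer}.
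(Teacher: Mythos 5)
Your proof is correct and follows essentially the same route as the paper's: both pass to the double cover $\widehat{X}=\wt X/\tau$ and use the identities $p^*\tr^*=1+\tau$, the $H^*$-module property of $\tr^*$, and compatibility of $\tr^*$ with Kronecker pairings and $\Sq^2$, from \cref{lem:transfer}. The only cosmetic difference is that the paper keeps the evaluation on $[\wt X]$ and shows $\langle q^*\tr^*(x)\cup x,[\wt X]\rangle=\langle x\cup x,[\wt X]\rangle$, whereas you transfer the whole class $\tau x\cup x$ down to $H^4(\widehat X;\Z/2)$ and show it vanishes there; these are the same calculation read in opposite directions.
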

\begin{proof}
	Let $q\colon \wt{X}\to \widehat{X}:=\wt{X}/\tau$ be the double covering. Then $q^*\circ \tr^*(x)=x+\tau x$ and
	\begin{align*}
		\langle q^*\circ \tr^*(x)\cup x,[\wt X]\rangle &=\langle \tr^*(q^*\circ \tr^*(x)\cup x),[\widehat{ X}]\rangle\\
		&=\langle \tr^*(x)\cup \tr^*(x),[\widehat{X}]\rangle\\
		&=\langle Sq^2(\tr^*(x)),[\widehat{X}]\rangle\\
		&=\langle Sq^2(x),\tr_*[\widehat{X}]\rangle\\
		&=\langle x\cup x,[\wt X]\rangle.
	\end{align*}
	Here we used the identities from \cref{lem:transfer} (with $p=q$).
	It follows that \[\langle x\cup \tau x,[\wt X]\rangle=\langle x\cup (q^*\circ \tr^*(x)+x),[\wt X]\rangle=2\langle x\cup x,[\wt X]\rangle=0\]
	for all $x\in H^2(\wt X;\Z/2)$.
\end{proof}
\begin{lemma}
	\label{cor:even}
	The image of $\trace\circ\tau_*\circ \kappa'$ is contained in $2\Z^w$.
\end{lemma}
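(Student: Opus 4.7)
The plan is to check that the homomorphism $T := \trace \circ \tau_* \circ \kappa' \colon \Gamma(\pi_2 X) \to \Z^w$ takes even values on a convenient generating set. Since $\pi_2(X) \cong \pi_2(\wt X) \cong H_2(\wt X)$ is free abelian (by Poincar\'e duality for $\wt X$), \cref{lem:gammafree} exhibits $\Gamma(\pi_2 X)$ as free abelian on generators $v(b)$ and $v(b+b') - v(b) - v(b')$ for $b, b'$ in a $\Z$-basis $\mathcal{B}$ of $\pi_2 X$, and linearity of $T$ reduces the problem to these two families.

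The key computational step is an explicit identification of $\kappa'(v(\alpha))$. Naturality of Whitehead's $\zeta$ applied to a representative $S^2 \to \wt X$ of $\alpha \in \pi_2 \wt X$ yields $\zeta^{-1}(v(\alpha))(\beta) = \langle \beta, \alpha \rangle\, \alpha$ for $\beta \in H^2(\wt X)$; precomposing with $\PD^{-1}$ and using $\PD^{-1}(\gamma)(\alpha) = \gamma \cdot \alpha$ gives the rank-one formula
\[
\kappa'(v(\alpha))(\gamma) \;=\; (\gamma \cdot \alpha)\, \alpha.
\]
Consequently $\tau_* \circ \kappa'(v(\alpha))$ is the rank-one operator $\gamma \mapsto (\gamma \cdot \alpha)\, \tau_*\alpha$, whose trace is $T(v(\alpha)) = \tau_*\alpha \cdot \alpha$. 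Since $w(\tau) = 1$ implies $\tau_*[\wt X] = [\wt X]$, the projection formula yields $\tau_* \circ \PD = \PD \circ \tau^*$; setting $x := \PD^{-1}(\alpha) \in H^2(\wt X; \Z/2)$, the intersection number reduces modulo $2$ to $\langle \tau^* x \cup x,\, [\wt X]\rangle$, which vanishes by \cref{lem:even}.

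For the cross generators, linearity of $T$ together with bilinearity of the intersection pairing gives
\[
T\bigl(v(b+b') - v(b) - v(b')\bigr) \;=\; \tau_* b \cdot b' + \tau_* b' \cdot b.
\]
Because $\tau_*$ preserves the intersection pairing (again using $w(\tau) = 1$) and the pairing is symmetric over $\Z$, we have $\tau_* b' \cdot b = b' \cdot \tau_* b = \tau_* b \cdot b'$, so this value equals $2(\tau_* b \cdot b') \in 2\Z$. Hence $T$ sends every generator of $\Gamma(\pi_2 X)$ into $2\Z$, completing the proof.

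The principal obstacle is the rank-one identification of $\kappa'(v(\alpha))$, which requires carefully tracing through Whitehead's $\zeta$ via naturality from the fundamental case $Y = S^2$. Once that formula is in hand, evenness on $v(\alpha)$ is an immediate consequence of Poincar\'e duality and \cref{lem:even}, while the cross generator reduces to a short bilinear algebra computation that exploits the fact that $\tau$ is an orientation-preserving involution.
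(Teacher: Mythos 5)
Your proof is correct and rests on the same two ingredients the paper uses: \cref{lem:even} to handle the ``diagonal'' contributions, and symmetry of the intersection pairing (together with $\tau_*$ preserving it, since $w(\tau)=1$) to pair up the ``cross'' contributions into even multiples. The difference is organizational rather than substantive: the paper picks a basis of $H^2(\wt X)$, writes a general element of $\Sym_\Z(H^2(\wt X),H_2(\wt X))$ as a symmetric matrix $(\lambda_{i,j})$, and splits the trace into diagonal and off-diagonal parts; you instead evaluate $T = \trace\circ\tau_*\circ\kappa'$ on the Whitehead basis $v(b)$, $v(b+b')-v(b)-v(b')$ of $\Gamma(\pi_2 X)$ from \cref{lem:gammafree}. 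Your route buys a slightly cleaner bookkeeping but requires the explicit rank-one identification $\zeta^{-1}(v(\alpha))(\beta)=\langle\beta,\alpha\rangle\alpha$, which the paper sidesteps by never unwinding $\zeta$ on generators; that identification is correct and follows from naturality of $\zeta$ (or, equivalently, from the standard embedding of $\Gamma(A)$ into the symmetric tensors of $A\otimes A$ for $A$ free), but it is a step worth writing out carefully since the paper's statement of \cref{thm:cap} does not spell out the naturality you invoke.
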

\begin{proof}
	The trace fits into the following commutative diagram:
	\[\xymatrix{
			\Hom_\Z(H_2(\wt X),H_2(\wt X))\ar[rr]^{\trace}&&\Z\\
			&H^2(\wt X)\otimes H_2(\wt X)\ar[lu]^{\phi_1}\ar[ru]^{\phi_2}&}\]
	where $\phi_1$ is the slant map and $\phi_2$ the evaluation, i.e.\ $\phi_1(\alpha\otimes x):=(y\mapsto (\alpha\cap y)x)$ and $\phi_2(\alpha\otimes x):=\alpha \cap x$.
	
	Let $\{\beta_i\}$ be a $\Z$-basis of $H_2(\wt X)$. Every element in $H^2(\wt X)\otimes H_2(\wt X)$ can be written as $\sum_{i,j}\lambda_{i,j}\beta_i\otimes(\beta_j\cap[\wt X])$ for some $\lambda_{i,j}\in \Z$. The map
	\[\Psi:=\phi_1(\sum_{i,j}\lambda_{i,j}\beta_i\otimes(\beta_j\cap[\wt X]))\circ \PD\colon H^2(\wt X)\to H_2(\wt X)\]
	is given by $\Psi(\alpha)=\sum_{i,j}\lambda_{i,j}((\alpha\cup \beta_i)\cap[\wt X])(\beta_j\cap[\wt X])$. Recall that it is symmetric if $\alpha'\cap \Psi(\alpha)=\alpha\cap \Psi(\alpha')$. Let $\{\beta_i^*\}$ be the basis dual to $\{\beta_i\}$ in the sense that $(\beta_i^*\cup\beta_j)\cap[\wt X]=\delta_{ij}$. We see that if $\Psi$ is symmetric, then
	\begin{align*}
	\lambda_{i',j'}&=\sum_{i,j}\lambda_{i,j}((\beta_{i'}^*\cup \beta_i)\cap[\wt X])((\beta^*_{j'}\cup\beta_j)\cap[\wt X])\\
	&=\beta_{j'}^*\cap\Psi(\beta^*_{i'})\\
	&=\beta_{i'}^*\cap\Psi(\beta^*_{j'})\\
	&=\sum_{i,j}\lambda_{i,j}((\beta_{j'}^*\cup \beta_i)\cap[\wt X])((\beta^*_{i'}\cup\beta_j)\cap[\wt X])\\
	&=\lambda_{j',i'}
	\end{align*}
	One easily computes
	\[\tau_*(\phi_1(\sum_{i,j}\lambda_{i,j}\beta_i\otimes(\beta_j\cap[\wt X])))=\phi_1(\sum_{i,j}\lambda_{i,j}\tau\beta_i\otimes(\beta_j\cap[\wt X]))\]
	and using that $\tau$ has order two and $w(\tau)=1$, we get
	\[(\tau \beta_i\cup\beta_j)\cap[\wt X]=(\tau^2\beta_i\cup\tau\beta_j)\cap w(\tau)[\wt X]=(\beta_i\cup\tau\beta_j)\cap[\wt X].\]
	Let $\Psi$ be symmetric. By \cref{lem:even} we then get
	\begin{align*}
	\phi_2(\sum_{i,j}\lambda_{i,j}\tau\beta_i\otimes(\beta_j\cap[\wt X]))&=\sum_{i,j}\lambda_{i,j}(\tau\beta_i\cup\beta_{j})\cap[\wt X]\\
	&=\sum_i(\tau\beta_i\cup\beta_{i})\cap[\wt X]+2\sum_{i<j}\lambda_{i,j}(\tau\beta_i\cup\beta_j)\cap[\wt X]\\
	&=2\sum_i(\beta_i\cup\beta_{i})\cap[\wt X]+2\sum_{i<j}\lambda_{i,j}(\tau\beta_i\cup\beta_j)\cap[\wt X]
	\end{align*}
	This implies the lemma since by definition of $\kappa'$ every element in its image is coming from $\Sym_\Z(H^2(\wt X),H_2(\wt X))$.
\end{proof}

\begin{proof}[Proof of \cref{lem:mainteichner}]
	Since $\Z^w\otimes_{\Z\pi} \Z^w=\Z$, $f_*\colon \pi_2(X)\to \pi_2(B)$ is an isomorphism and $f^*\lambda=\lambda_X$, it suffices to show that there exists a $\Z\pi$-homomorphism
	\[\kappa\colon \Gamma(\pi_2(X))\to\Z^w\]
	with $\kappa(\lambda_X)=1$.
	
	We have $\operatorname{rank}(H_2(\wt X))=\chi(\wt{X})-2=|\pi|\chi(X)-2$, where $\chi$ denotes the Euler characteristic. Thus it suffices to show that there exist homomorphisms $\kappa_i\colon \Gamma(\pi_2(X))\to \Z^w$ such that:
	\begin{enumerate}
		\item $\kappa_1(\lambda_X)=|\pi|$,
		\item $\kappa_2(\lambda_X)=\operatorname{rank}(H_2(\wt X))$ and
		\item $\kappa_3(\lambda_X)$ is odd,
	\end{enumerate}
	because if $\kappa_3(\lambda_X)=2n+1$, then for $\kappa=\kappa_3+n\kappa_2-n\chi(X)\kappa_1$ we have $\kappa(\lambda_X)=1$.
	\textbf{(1):}
	For a $\Z\pi$-module $M$ let $M^{\pi_w}$ denote the submodule consisting of those objects $m$ with $gm=w(g)m$ for all $g\in\pi$. We have $\lambda_X\in \Gamma(\pi_2(X))^{\pi_w}$. Any $\Z$-homomorphism $\Gamma(\pi_2(X))^{\pi_w}\to \Z^w$ is already a $\Z\pi$-homomorphism, since $\pi$ acts on both sides via $w$. Since $X$ is a finite 4-dimensional Poincar\'e complex with finite fundamental group, $\pi_2(X)$ is free abelian. By \cref{lem:gammafree}, $\Gamma(\pi_2(X))$ is therefore free abelian. Capping with $[\wt X]$ gives an isomorphism $H^2(\wt X)\to H_2(\wt X)$ and hence $\lambda_X\in \Gamma(\pi_2(X))^{\pi_w}$ is primitive. Therefore, there exists a $\Z$- and thus $\Z\pi$-linear map
	\[d\colon \Gamma(\pi_2(X))^{\pi_w}\to \Z^w\]
	with $d(\lambda_X)=1$. We define $\kappa_1$ as the composition
	\[\kappa_1\colon \Gamma(\pi_2(X))\xrightarrow{\cdot N^w}(\Gamma(\pi_2(X))^w)^\pi\xrightarrow{d}\Z^w.\]
	\textbf{(2):}
	Define $\kappa_2:=\trace\circ \kappa'$. By \cref{lem:kappa'}, \[\kappa_2(\lambda_X)=\trace(\Id_{H_2(\wt X)})=\operatorname{rank}(H_2(\wt X)).\]
	\textbf{(3):}
	Let $G\leq \pi$ be a $2$-Sylow subgroup and $p\colon X'\to X$ the corresponding covering. Then the map
	\[tr_G^\pi\colon \Gamma(\pi_2X)\otimes_{\Z\pi}\Z^w\to \Gamma(\pi_2X)\otimes_{\Z G}\Z^{p^*w}\]
	has the property that 
	\[tr_G^\pi(\lambda_X\otimes 1)=\sum_{k\in G\backslash \pi}(k(\lambda_X)\otimes k\cdot 1)=[\pi:G](\lambda_X\otimes 1),\]
	where the last equation follows from $k(\lambda_X) = w(k)\cdot \lambda_X \in \Gamma(\pi_2X)$ for all $k\in\pi$.
	Since $[\pi:G]$ is odd and $\lambda_X=\lambda_{X'}$, we can therefore assume that $\pi$ is a $2$-group by composing $\kappa_3$ for $G$ with $tr_G^\pi$.
	Note that $p^*\colon H^1(X;\Z/2)\to H^1(X';\Z/2)$ is injective and thus $p^*w$ is non trivial if and only if $w$ is non trivial.
	
	Let $\kappa'$ be the homomorphism from \eqref{eq:kappa}. By \cref{cor:even}, the image of $\trace\circ\tau_*\circ\kappa'$ is always even and we can define $\kappa_3:=-\frac{1}{2}(\trace\circ\tau_*\circ\kappa')$. By \cref{lem:2.3.3}, $\kappa_3(\lambda_X)=1$.
\end{proof}

\begin{proof}[Proof of \cref{prop:2.2.2ii}]
	By \cite[Theorem 2.1]{hambleton-kreck} we have
	\[\Gamma(\Z\pi)\cong F\oplus \bigoplus_{g\in\pi\setminus\{1\}, g^2=1}\Z\pi/\Z\pi(1-g),\]
	for some free module $F$. It is an easy calculation that this implies
	\[\Tors(\Z^w\otimes_{\Z\pi}\Gamma(\Z\pi))\cong (\Z/2)^r,\]
	where $r$ is the number of $g\in \pi\setminus\{1\}$ with $g^2=1$ and $w(g)=-1$.
	Since
	\[\Gamma(\pi_2 X)\cong \Gamma(\Z\pi)\oplus \Gamma(M)\oplus \Z\pi\otimes_\Z M,\]
	$\Tors(\Z^w\otimes_{\Z\pi}\Gamma(\Z\pi))$ is a summand of $\Tors(\Z^w\otimes_{\Z\pi}\Gamma(\pi_2X))$.
\end{proof}
\begin{prop}
	\label{prop:rp4cp2}
	There is a finite Poincar\'e complex with the same quadratic $2$-type as $\RP^4\#\CP^2$ which is not homotopy equivalent to $\RP^4\#\CP^2$.
\end{prop}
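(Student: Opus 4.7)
The plan is to apply \cref{thm:teichner} to the Postnikov $2$-type $B$ of $X := \RP^4\#\CP^2$. First I would read off the quadratic $2$-type of $X$: the fundamental group is $\pi = \Z/2$ with $w$ the unique non-trivial orientation character, and because the universal cover is $\CP^2\#\overline{\CP^2}$ with the covering involution swapping the two summands, $\pi_2(X) \cong \Z\pi$ and the equivariant intersection form is the rank-one hermitian form $\lambda = \langle 1\rangle$. Since $\pi_2(X)$ is a free $\Z\pi$-module, $H^3(\pi;\pi_2(X)) = 0$ and the $k$-invariant vanishes. The proof of \cref{prop:2.2.2ii} yields $\Tors(\Z^w\otimes_{\Z\pi}\Gamma(\pi_2(X))) \cong \Z/2$, so \cref{thm:teichner} gives $|\mcS_4^{\PD}(B,w,\lambda)| = 2$. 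Writing $X = K\cup_\alpha D^4$ and choosing a lift $\widehat\beta \in \Gamma(\pi_2 B)\subseteq \pi_3(K)$ of the non-trivial torsion element $\beta$, the complex $X_\beta := K\cup_{\alpha + \widehat\beta} D^4$ from the end of the proof of \cref{thm:mainteichner} is a Poincar\'e $4$-complex with the same quadratic $2$-type as $X$ but representing the other element of $\mcS_4^{\PD}(B,w,\lambda)$.

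The main task is to promote this inequivalence of $B$-polarized complexes to an inequivalence of unpolarized Poincar\'e complexes. Any homotopy equivalence $h\colon X\to X_\beta$ would induce a self-equivalence $\phi$ of $B$ over $B\pi$ with $f_\beta\circ h\simeq \phi\circ f$ and $\phi$ preserving $\lambda$. Because $\Aut(\Z/2) = 1$ and $k = 0$, such a $\phi$ corresponds to a $\Z\pi$-module automorphism of $\pi_2(B) = \Z\pi$, i.e.\ multiplication by a unit $u\in\{\pm 1,\pm t\}$; the hermitian condition $u\bar u = 1$ together with $\bar t = -t$ leaves only $u = \pm 1$, so $\phi \in \{\id,-\id\}$. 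For $\phi = \id$ the relation $f_\beta h\simeq f$ exhibits $(X,f)\sim(X_\beta,f_\beta)$ in $\mcS_4^{\PD}(B,w,\lambda)$, contradicting that they represent distinct elements.

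The remaining case $\phi = -\id$ is the main obstacle. To rule it out I would realize $\phi$ by a self-homeomorphism $h_0\colon X\to X$, using that the non-orientability of $\RP^4$ gives $\RP^4\#\CP^2 \cong \RP^4\#\overline{\CP^2}$ by transporting the $\CP^2$ summand along an orientation-reversing loop in $\RP^4$; the resulting self-map acts as $-\id$ on $\pi_2(X)$. Then $\phi f\simeq f\circ h_0$, so $h' := h\circ h_0^{-1}\colon X\to X_\beta$ satisfies $f_\beta\circ h'\simeq f$, once again forcing $(X,f)\sim(X_\beta,f_\beta)$ in $\mcS_4^{\PD}(B,w,\lambda)$ and contradicting the distinctness above. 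The crux is the geometric construction of $h_0$; once that is in hand, the rest is a formal orbit chase.
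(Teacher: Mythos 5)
Your strategy matches the paper's up to the point of invoking \cref{thm:teichner} to see $\mcS_4^{\PD}(B,w,\lambda)\cong\Z/2$ and then analyzing $\hAut(B)$. The paper computes $H_4(B;\Z^w)\cong\Z\oplus\Z/2\oplus\Z/2$ via the splitting $s\colon B\Z/2\to B$, observes $\hAut(B)\cong\{\pm 1,\pm t\}$, and shows the induced action is by $\pm 1$ and hence trivial on torsion; your version cuts down to the form-preserving subgroup $\{\pm\id\}$ and chases orbits by hand.

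There are two real problems. First, restricting to $\phi$ with $\phi^*\lambda=\lambda$ drops a case: a homotopy equivalence $h\colon X\to X_\beta$ need not respect the (non-canonical) fundamental classes of these non-orientable complexes, and when $h_*[X]=-[X_\beta]$ the induced $\phi\in\hAut(B)$ satisfies $\phi^*\lambda=-\lambda$, i.e.\ $\phi=\pm t$. You must rule those out as well. Second, the geometric realization you propose for $-\id$ is not correct: transporting the $\CP^2$-summand around an orientation-reversing loop exchanges the two lifts of $\CP^2$ in the universal cover $S^4\#\CP^2\#\overline{\CP^2}$ and therefore induces multiplication by $\pm t$ on $\pi_2(X)\cong\Z[\Z/2]$, not $-\id$ (so it is in fact the realization you need for the case you omitted). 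The self-homeomorphism realizing $-\id$ is the identity on $\RP^4$ together with complex conjugation on $\CP^2$. In any case no geometric realization is needed: the defining relation $v(-a)=v(a)$ of $\Gamma$ gives $\Gamma(-\id)=\id$; multiplication by $t$ acts on $\Z^w\otimes_{\Z\pi}\Gamma(\Z\pi)$ as $w(t)=-1$, which is $+1$ on its $\Z/2$-torsion; and $\phi\circ s\simeq s$ (obstruction theory, as in the paper) handles the $H_4(B\Z/2;\Z^w)$ summand. So every $\phi\in\hAut(B)$ acts trivially on the torsion of $H_4(B;\Z^w)$, the two fundamental classes lie in distinct orbits, and the argument closes without constructing any $h_0$.
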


\begin{proof}	
	Let $B$ be the Postnikov $2$-type of $\RP^4\#\CP^2$. By \cref{lem:2.1.1}, $B$-polarized Poincar\'e complexes up to homotopy equivalence over $B$ are classified by the image of their fundamental class in $H_4(B;\Z^w)$. Hence to understand them up to homotopy equivalence, we have to study the action of all self-homotopy equivalences of $B$ on $H_4(B;\Z^w)$. As $\pi_2(B)$ is free, the $k$-invariant of $B$ is trivial and the map $B\to B\Z/2$ admits a splitting $s$. Using the Serre spectral sequence and the proof of \cref{prop:2.2.2ii}, one computes	
	
	\begin{align*}
	H_4(B;\Z^w)&\cong\Z^w\otimes_{\Z[\Z/2]}H_4(\wt B;\Z)\oplus H_4(B\Z/2;\Z^w)\\
	&\cong \Z^w\otimes_{\Z[\Z/2]}\Gamma(\Z[\Z/2])\oplus\Z/2\cong \Z\oplus\Z/2\oplus\Z/2.
	\end{align*}
	
	By obstruction theory, for every self-homotopy equivalence $\phi$ of $B$ we have $\phi\circ s\simeq s$. Hence $\phi$ acts diagonally on $H_4(B;\Z^w)$ with the identity on $H_4(B\Z/2;\Z^w)$ and on $\Z^w\otimes_{\Z[\Z/2]}H_4(\wt B;\Z)$ by the map on $\pi_2(B)$ induced by $\phi$. The automorphisms of $\pi_2(B)\cong \Z[\Z/2]$ are given by multiplicaton with $\{\pm 1,\pm g\}$ where $g\in\Z/2$ is the generator. It follows that the action of $\phi$ on $H_4(B;\Z^w)$ is given by $\pm 1$ and thus is the identity on the torsion subgroup. In particular, it is the identity on the torsion subgroup $\Z/2$ of $\Z^w\otimes_{\Z[\Z/2]} \Gamma(\Z[\Z/2]))$. It thus follows from \cref{thm:teichner} that there is precisely one Poincar\'e complex which has the same quadratic $2$-type as $\RP^4\#\CP^2$ but is not homotopy equivalent to it.
\end{proof}
\bibliographystyle{amsalpha}
\bibliography{gamma}
\end{document}